\documentclass[a4paper, 11pt, reqno]{amsart}
\usepackage{amsmath,amsfonts,amssymb,amsthm,enumerate}
\usepackage{graphicx}
\usepackage{xy} \xyoption{all}

\usepackage{xcolor}

\newtheorem{thm}{Theorem}[section]

\newtheorem{prop}[thm]{Proposition}
\newtheorem{lem}[thm]{Lemma}

\theoremstyle{definition}
\newtheorem{defn}[thm]{Definition}
\newtheorem{exas}[thm]{Example}
\newtheorem{rem}[thm]{Remark}

\let\phi\varphi

\title[Reduction theorem for Leavitt labelled path algebras]
{The Reduction Theorem for Leavitt Labelled Path Algebras and Its Applications}

\author[D. Gon\c{c}alves]{Daniel Gon\c{c}alves}
\address{Departamento de Matem\'atica, Universidade Federal de Santa Catarina,
88040-970 Florian\'opolis, SC, Brazil}
\email{daemig@gmail.com}

\author[N. D. Nam]{Nguyen Dinh Nam}
\address{Faculty of Pedagogy, Ha Tinh University, Ha Tinh, Vietnam}
\email{nam.nguyendinh@htu.edu.vn}

\author[T. G. Nam]{Tran Giang Nam}
\address{Institute of Mathematics, VAST, 18 Hoang Quoc Viet, Cau Giay,
Hanoi, Vietnam}
\email{tgnam@math.ac.vn}

\thanks{The first author was partially supported by Conselho Nacional de
Desenvolvimento Cient\'ifico e Tecnol\'ogico (CNPq), Brazil, and Funda\c{c}\~ao
de Amparo \`a Pesquisa e Inova\c{c}\~ao do Estado de Santa Catarina (FAPESC).
The second and third authors were supported by the Vietnam Institute for
Advanced Study in Mathematics (VIASM)}

\subjclass[2020]{Primary 16S88; Secondary 16S10, 16W50, 16N20, 16N60}

\keywords{Leavitt labelled path algebras, reduction theorem, uniqueness
theorems, semiprime rings, semiprimitive rings}

\begin{document}

\begin{abstract}
We introduce a notion of labelled cycle for normal labelled spaces and prove a
reduction theorem for Leavitt labelled path algebras. We show that every
nonzero element can be reduced, by suitable left and right multiplication,
either to a nonzero scalar multiple of a projection or to a polynomial
supported on a labelled cycle without exits. This extends the classical
reduction theorem for Leavitt path algebras of directed graphs and its
analogues for ultragraph Leavitt path algebras and subshift algebras. As
applications, we prove the graded uniqueness theorem and the Cuntz--Krieger
uniqueness theorem for Leavitt labelled path algebras, and show that these
algebras are semiprime and semiprimitive over fields.
\end{abstract}

\maketitle

\section{Introduction}

Leavitt path algebras associated with directed graphs were independently
introduced by Abrams and Aranda Pino in \cite{ap:tlpaoag05} and by Ara,
Moreno, and Pardo in \cite{amp:nktfga}. Since then, they have developed into
a prominent area of research in algebra owing to their remarkable versatility.
Among their numerous applications, we highlight their deep connections with
symbolic dynamics and the theory of graph \(C^*\)-algebras (see, for example,
\cite{a:lpatfd,AAS,AbHaz25}), as well as with chip-firing games (see, e.g.,
\cite{AbHaz23,HazNam1,HazNam2}). One of the principal motivations for the
study of Leavitt path algebras is their ability to unify ideas from several areas
of mathematics. In particular, they provide a purely algebraic framework for
investigating phenomena that originally arose in functional analysis, especially
in the theory of graph \(C^*\)-algebras, while avoiding the use of analytic
techniques. Consequently, they offer an accessible setting for algebraists while
still preserving rich structural, dynamical, and categorical properties.

Labelled graphs arise naturally in several areas of mathematics and theoretical
computer science, including symbolic dynamics, automata theory, coding theory,
and operator algebras. In symbolic dynamics, labelled graphs provide concrete
presentations of shift spaces, particularly sofic shifts, where infinite sequences
are described through labelled paths in a graph. This connection has made
labelled graphs an important combinatorial tool for studying dynamical systems.
At the same time, labelled graphs are flexible enough to encode constructions
which go beyond ordinary directed graphs, and this makes them a natural
framework for studying classes of algebras that simultaneously generalize graph
algebras, ultragraph algebras, and algebras arising from symbolic dynamics.

Labelled graph \(C^*\)-algebras were introduced by Bates and Pask in
\cite{BatesPask:caolg}, and they exhibit several interesting connections with
symbolic dynamics (see, e.g., \cite{BatesPask:caolg}, \cite{BaCaPask}, and
\cite{CasWyk}). The \(C^*\)-algebraic theory of labelled spaces has also been
developed through inverse semigroups, tight spectra, partial crossed products,
and groupoid models; see, for instance,
\cite{BoavaCastroMortari:inverseSemigroups,BoavaCastroMortari:groupoidModels,CasWyk}.
These approaches show that labelled spaces carry not only combinatorial
information, but also a rich topological and dynamical structure. In
\cite{BCGW:lpaolg}, Boava, de Castro, Gon\c{c}alves, and van Wyk introduced
and studied the algebraic analogue of a labelled graph \(C^*\)-algebra, called a
\emph{Leavitt labelled path algebra}. These algebras properly generalize
Leavitt path algebras of directed graphs and ultragraphs, and they also
interact naturally with partial skew group rings and Steinberg algebras. Consequently,
Leavitt labelled path algebras provide a unified algebraic framework for the study of graph, ultragraph, and symbolic-dynamical phenomena.

A central tool in the structure theory of Leavitt path algebras is the
\emph{reduction theorem}. In its classical form, it says that every nonzero
element can be reduced, by multiplying on the left and on the right by suitable
algebra elements, to a much simpler element: either a nonzero scalar multiple
of a vertex projection or an element supported on a cycle without exits. This
principle was originally established for Leavitt path algebras of directed graphs
in \cite{AAS}, and it has proven to be extremely useful for characterizing
ring-theoretic properties such as uniqueness theorems, ideal-theoretic
properties, semiprimeness, and semiprimitivity. Analogous results have
subsequently been obtained in several related settings, including relative Cohn
path algebras \cite{CantoGon}, ultragraph Leavitt path algebras
\cite{gon:ratrt19}, and algebras associated with one-sided subshifts over
arbitrary alphabets \cite{BaCaGoRo}.

The importance of reduction theorems extends beyond their immediate ring-theoretic consequences. In the ultragraph setting, the reduction theorem
is a key ingredient in the study of representations, including the description of
faithful representations arising from branching systems \cite{gon:ratrt19}. In
the setting of subshift algebras, the reduction theorem yields uniqueness
results and structural consequences such as semiprimeness and
semiprimitivity \cite{BaCaGoRo}. More recently, reduction techniques have
also played an essential role in the study of the socle of subshift algebras and
in applications to subshift conjugacy: in \cite{GoncalvesRoyer:socleSubshift},
the socle and its grading are used to obtain invariants for conjugacy of
Ott--Tomforde--Willis subshifts and for isometric conjugacy of subshifts
constructed with the product topology. These applications illustrate that reduction theorems are not merely technical devices; rather, they provide a powerful framework for extracting dynamical and representation-theoretic information from
noncommutative algebras.

The main goal of this article is to establish a reduction theorem for Leavitt
labelled path algebras. This fills a natural gap in the existing theory, since a general
reduction theorem for this class has not yet been available. To obtain such a
result, we introduce a notion of labelled cycle adapted to labelled spaces. This
notion is weaker than the one used in \cite{BCGW:lpaolg}, and it is designed
to include both the usual notion of a cycle in ultragraphs and its analogue in the setting of labelled graphs. Using this definition, we prove that every nonzero element of a
Leavitt labelled path algebra can be reduced either to a nonzero scalar multiple of a projection or to a polynomial supported on a labelled cycle without exits.

As applications of our reduction theorem, we recover and prove uniqueness
theorems for Leavitt labelled path algebras, including a graded uniqueness
theorem and a Cuntz--Krieger uniqueness theorem. We also show that every
Leavitt labelled path algebra over a field is semiprime and semiprimitive.
These consequences demonstrate that the reduction theorem provides a robust
tool for the algebraic structure theory of labelled spaces and opens the way
for further applications to representation theory, ideal structure,
and dynamical invariants associated with labelled spaces.

The paper is organized as follows. In Section~2, we recall the necessary
preliminaries on labelled graphs, labelled spaces, and Leavitt labelled path
algebras, following \cite{BatesPask:caolg} and \cite{BCGW:lpaolg}. In
Section~3, we introduce the notion of labelled cycles
(Definition~\ref{def:cycle}), which is a weaker version of the concept
introduced in \cite{BCGW:lpaolg}, and establish a reduction theorem for
Leavitt labelled path algebras (Theorem~\ref{thm:reduction}).

In Section~4, we use Theorem~\ref{thm:reduction} to establish the graded
uniqueness theorem (Theorem~\ref{thm:graded-uniq}) and the Cuntz--Krieger
uniqueness theorem (Theorem~\ref{thm:Cuntz-Krieger}) for Leavitt labelled
path algebras. In particular, we show that every Leavitt labelled path algebra
is semiprime (Theorem~\ref{semiprime}) and semiprimitive
(Theorem~\ref{semiprimitive}). Our approach differs from the existing proofs
in the settings of directed graphs and ultragraphs.

\section{Preliminaries}

In this section, we recall the necessary concepts and notation concerning
labelled spaces and Leavitt labelled path algebras, following
\cite{BatesPask:caolg,BCGW:lpaolg}.

A directed graph \(E=(E^0,E^1,r,s)\) consists of a nonempty set \(E^0\) of
vertices, a set \(E^1\) of edges, and range and source maps $
r,s:E^1\longrightarrow E^0.$
A path of length \(n\geq1\) in \(E\) is a sequence
\(\lambda=\lambda_1\lambda_2\cdots\lambda_n\) of edges such that
\[
r(\lambda_i)=s(\lambda_{i+1})
\quad\text{for all } i=1,\ldots,n-1.
\]
We write \(|\lambda|=n\) for the length of \(\lambda\), and we regard vertices
as paths of length \(0\). The set of paths of length \(n\) is denoted by
\(E^n\), and
\[
E^*=\bigcup_{n\geq0}E^n.
\]
The set of infinite paths is denoted by \(E^\infty\).

A labelled graph consists of a graph \(E\) together with a surjective labelling
map
\[
\mathcal L:E^1\longrightarrow\mathcal A,
\]
where \(\mathcal A\) is a fixed nonempty set, called the alphabet, and whose
elements are called letters. We denote by \(\mathcal A^*\) the set of all finite
words over \(\mathcal A\), including the empty word \(\omega\), and by
\(\mathcal A^\infty\) the set of all infinite words over \(\mathcal A\). We
regard \(\mathcal A^*\) as a monoid under concatenation. Thus, if
\(\alpha\in\mathcal A^*\setminus\{\omega\}\) and \(n\in\mathbb N^*\), then
\(\alpha^n\) denotes the word obtained by concatenating \(\alpha\) with itself
\(n\) times, and \(\alpha^\infty\in\mathcal A^\infty\) denotes the infinite
concatenation of \(\alpha\).

The labelling map extends in the usual way to finite and infinite paths:
\[
\mathcal L:E^n\longrightarrow\mathcal A^*
\quad (n\geq1),
\qquad
\mathcal L:E^\infty\longrightarrow\mathcal A^\infty.
\]
For \(n\geq1\), let $\mathcal L^n=\mathcal L(E^n)$
be the set of labelled paths of length \(n\), and let
$
\mathcal L^\infty=\mathcal L(E^\infty)$
be the set of infinite labelled paths. We also regard \(\omega\) as a labelled
path of length \(0\), and set
\[
\mathcal L^{\geq1}=\bigcup_{n\geq1}\mathcal L^n,\qquad
\mathcal L^*=\mathcal L^{\geq1}\cup\{\omega\},\qquad
\mathcal L^{\leq\infty}=\mathcal L^*\cup\mathcal L^\infty.
\]

For \(\alpha\in\mathcal L^*\) and \(A\in\mathcal P(E^0)\), the relative range
of \(\alpha\) with respect to \(A\) is the set
\[
r(A,\alpha)=
\begin{cases}
\{r(\lambda)\mid \lambda\in E^*,\ \mathcal L(\lambda)=\alpha,\ s(\lambda)\in A\},
& \text{if } \alpha\in\mathcal L^{\geq1},\\[2mm]
A, & \text{if } \alpha=\omega.
\end{cases}
\]
The range of \(\alpha\), denoted by \(r(\alpha)\), is defined by $
r(\alpha)=r(E^0,\alpha).$
Thus \(r(\omega)=E^0\), and, if \(\alpha\in\mathcal L^{\geq1}\), then
\[
r(\alpha)=\{r(\lambda)\mid \lambda\in E^*,\ \mathcal L(\lambda)=\alpha\}.
\]
For \(A\subseteq E^0\), we also define
\[
\mathcal L(AE^1)
=
\{\mathcal L(e)\mid e\in E^1,\ s(e)\in A\}
=
\{a\in\mathcal A\mid r(A,a)\neq\emptyset\}.
\]

A labelled path \(\alpha\) is a beginning of a labelled path \(\beta\) if $\beta=\alpha\beta'$
for some labelled path \(\beta'\). Labelled paths \(\alpha\) and \(\beta\) are
called comparable if one is a beginning of the other. If
\(\alpha=\alpha_1\alpha_2\cdots\alpha_n\in\mathcal L^n\), then, for
\(1\leq i\leq j\leq n\), we define
$
\alpha_{i,j}=\alpha_i\alpha_{i+1}\cdots\alpha_j,
$
and we set \(\alpha_{i,j}=\omega\) whenever \(j<i\).

\begin{defn}[{\cite[page 4]{BCGW:lpaolg}}]\label{def:labelledspace}
A labelled space is a triple \((E,\mathcal L,\mathcal B)\), where
\((E,\mathcal L)\) is a labelled graph and \(\mathcal B\) is a family of subsets
of \(E^0\) which is closed under finite intersections and finite unions,
contains \(r(\alpha)\) for every \(\alpha\in\mathcal L^{\geq1}\), and is closed
under relative ranges, that is,
$
r(A,\alpha)\in\mathcal B
\quad
\text{for all } A\in\mathcal B \text{ and all } \alpha\in\mathcal L^*.
$
The family \(\mathcal B\) is called an accommodating family for
\((E,\mathcal L)\).

A labelled space \((E,\mathcal L,\mathcal B)\) is weakly left-resolving if, for
all \(A,B\in\mathcal B\) and all \(\alpha\in\mathcal L^{\geq1}\), we have
\[
r(A\cap B,\alpha)=r(A,\alpha)\cap r(B,\alpha).
\]
A weakly left-resolving labelled space such that \(\mathcal B\) is closed under
relative complements will be called normal.
\end{defn}

We next recall two standard examples of labelled spaces.

 \begin{exas}[{\cite[Example 3.3]{BatesPask:caolg}}]\label{exa:labelledspace}

\noindent \begin{enumerate}
\item Given a graph \(E\), let \(\mathcal A=E^1\), let
\(\mathcal L:E^1\to\mathcal A\) be the identity map, and let \(\mathcal B\) be
the set of all finite subsets of \(E^0\). Then \((E,\mathcal L,\mathcal B)\) is
a normal labelled space.

\item Recall that an ultragraph is a quadruple
$
\mathcal G=(G^0,\mathcal G^1,r,s)$
consisting of a set of vertices \(G^0\), a set of edges \(\mathcal G^1\), and
maps
$
s:\mathcal G^1\to G^0,$
$r:\mathcal G^1\to\mathcal P(G^0)\setminus\{\emptyset\};$
see, for example, \cite[Definition 2.1]{tomf:auatelaacaastg03}.

Given an ultragraph \(\mathcal G=(G^0,\mathcal G^1,r,s)\), we can build an
associated labelled space as follows. Let \(E=E_{\mathcal G}\) be the graph
with
\[
E^0=G^0,
\qquad
E^1=\{(e,w)\mid e\in\mathcal G^1,\ w\in r(e)\}.
\]
The source and range maps \(s',r':E^1\to E^0\) are defined by $
s'(e,w)=s(e),$ $r'(e,w)=w.$
Set \(\mathcal A=\mathcal G^1\) and define
$
\mathcal L:E^1\to\mathcal A,$ $
\mathcal L(e,w)=e.$
Let \(\mathcal G^0\) be the smallest subset of \(\mathcal P(G^0)\) containing
all singleton vertices and all ranges of edges, and closed under finite
intersections, finite unions, and relative complements. If we let
\(\mathcal B=\mathcal G^0\), then \((E,\mathcal L,\mathcal B)\) is a normal
labelled space.
\end{enumerate}
\end{exas}

For a labelled space \((E,\mathcal L,\mathcal B)\), a nonempty set
\(A\in\mathcal B\) is called regular if, for every nonempty
\(B\in\mathcal B\) with \(B\subseteq A\), we have
$0<|\mathcal L(BE^1)|<\infty.$
The set of all regular elements of \(\mathcal B\), together with the empty set,
is denoted by \(\mathcal B_{\mathrm{reg}}\). For \(\alpha\in\mathcal L^*\), define
\[
\mathcal B_\alpha
=
\mathcal B\cap\mathcal P(r(\alpha))
=
\{A\in\mathcal B\mid A\subseteq r(\alpha)\}.
\]
If the labelled space is normal, then \(\mathcal B_\alpha\) is a Boolean
algebra for each \(\alpha\in\mathcal L^*\).

\begin{defn}[{\cite[Definition 3.1]{BCGW:lpaolg}}]\label{Def}
Let \((E,\mathcal L,\mathcal B)\) be a normal labelled space and let \(K\) be a
field. The Leavitt labelled path algebra associated with
\((E,\mathcal L,\mathcal B)\) with coefficients in \(K\), denoted by
\(L_K(E,\mathcal L,\mathcal B)\), is the universal \(K\)-algebra with generators  $\{p_A\mid A\in\mathcal B\}$ and $\{s_a,s_a^*\mid a\in\mathcal A\},$
subject to the following relations:
\begin{enumerate}
\item
\(p_{A\cap B}=p_Ap_B\),
\(p_{A\cup B}=p_A+p_B-p_{A\cap B}\), and \(p_\emptyset=0\), for every
\(A,B\in\mathcal B\);

\item
\(p_As_a=s_ap_{r(A,a)}\) and
\(s_a^*p_A=p_{r(A,a)}s_a^*\), for every \(A\in\mathcal B\) and
\(a\in\mathcal A\);

\item
\(s_a^*s_a=p_{r(a)}\) and \(s_b^*s_a=0\) if \(b\neq a\), for every
\(a,b\in\mathcal A\);

\item
\(s_as_a^*s_a=s_a\) and \(s_a^*s_as_a^*=s_a^*\), for every
\(a\in\mathcal A\);

\item
$
p_A=\sum_{a\in\mathcal L(AE^1)}s_ap_{r(A,a)}s_a^*,
$
for every \(A\in\mathcal B_{\mathrm{reg}}\).
\end{enumerate}
\end{defn}

For \(\alpha=a_1a_2\cdots a_n\in\mathcal L^{\geq1}\), we write
\[
s_\alpha=s_{a_1}s_{a_2}\cdots s_{a_n}
\quad\text{and}\quad
s_\alpha^*=s_{a_n}^*\cdots s_{a_2}^*s_{a_1}^*.
\]
The assignments
\[
p_A\mapsto p_A,\qquad s_a\mapsto s_a^*,\qquad s_a^*\mapsto s_a
\]
define an involution on \(L_K(E,\mathcal L,\mathcal B)\). Although
\(L_K(E,\mathcal L,\mathcal B)\) is not necessarily unital, we also set $s_\omega=s_\omega^*=1,$
where \(\omega\) is the empty word. This convention is used only to simplify
notation. For instance, \(s_\omega p_A s_\omega^*\) means \(p_A\). We never use
\(s_\omega\) by itself as an element of the algebra.

We record the following examples.

\begin{exas}\label{exa:Labelledleavittpathalgebra}

\noindent \begin{enumerate}
\item
\cite[Example~7.1]{BCGW:lpaolg}
Let \(E\) be an arbitrary graph, let \((E,\mathcal L,\mathcal B)\) be the normal
labelled space described in Example~\ref{exa:labelledspace}(1), and let \(K\)
be a field. Let \(L_K(E)\) denote the Leavitt path algebra of \(E\); see, for
example, \cite[Definition~1.2.3]{AAS}. Then
\(L_K(E,\mathcal L,\mathcal B)\) is isomorphic to \(L_K(E)\) via the map
\[
p_A\longmapsto\sum_{v\in A}v,
\qquad
s_e\longmapsto e,
\qquad
s_e^*\longmapsto e^*.
\]

\item
\cite[Example~7.2]{BCGW:lpaolg}
Let \(\mathcal G\) be an ultragraph, let \((E,\mathcal L,\mathcal B)\) be the
normal labelled space described in Example~\ref{exa:labelledspace}(2), and let
\(K\) be a field. Let \(L_K(\mathcal G)\) be the Leavitt path algebra of
\(\mathcal G\); see \cite[Definition 2.3]{gr:saccfulpavpsgrt} and
\cite[Definition~2.1]{ima:tlpaou}. Then
\(L_K(E,\mathcal L,\mathcal B)\) is isomorphic to \(L_K(\mathcal G)\) via an
isomorphism sending generators to generators.
\end{enumerate}
\end{exas}

The following proposition collects basic properties of Leavitt labelled path
algebras that will be used throughout the paper.

\begin{prop}\label{properties}
Let \((E,\mathcal L,\mathcal B)\) be a normal labelled space and let \(K\) be a
field. Then \(L_K(E,\mathcal L,\mathcal B)\) has the following properties:
\begin{enumerate}
\item
{\rm\cite[Lemma 4.12]{BCGW:lpaolg}}
All elements of the set
\[
\{p_A,s_a,s_a^*\mid A\in\mathcal B\setminus\{\emptyset\},\ a\in\mathcal A\}
\]
are nonzero.

\item
{\rm\cite[Proposition 3.2]{BCGW:lpaolg}}
For any
\(\alpha,\beta,\gamma,\sigma\in\mathcal L^*\),
\(A\in\mathcal B_\alpha\cap\mathcal B_\beta\), and
\(B\in\mathcal B_\gamma\cap\mathcal B_\sigma\), we have
\[
(s_\alpha p_A s_\beta^*)(s_\gamma p_B s_\sigma^*)
=
\begin{cases}
s_{\alpha\gamma'}p_{r(A,\gamma')\cap B}s_\sigma^*,
& \text{if } \gamma=\beta\gamma',\\[1mm]
s_\alpha p_{A\cap r(B,\beta')}s_{\sigma\beta'}^*,
& \text{if } \beta=\gamma\beta',\\[1mm]
0,
& \text{otherwise.}
\end{cases}
\]

\item
{\rm\cite[Proposition 3.8]{BCGW:lpaolg}}
The algebra \(L_K(E,\mathcal L,\mathcal B)\) is linearly spanned by
\[
\{s_\alpha p_A s_\beta^*
\mid
\alpha,\beta\in\mathcal L^*,\ A\in\mathcal B_\alpha\cap\mathcal B_\beta\}.
\]
Furthermore, \(L_K(E,\mathcal L,\mathcal B)\) is a \(\mathbb Z\)-graded
\(K\)-algebra with grading
\[
L_K(E,\mathcal L,\mathcal B)_n
=
\operatorname{span}_K
\{s_\alpha p_A s_\beta^*
\mid
\alpha,\beta\in\mathcal L^*,\
A\in\mathcal B_\alpha\cap\mathcal B_\beta,\
|\alpha|-|\beta|=n\}
\]
for all \(n\in\mathbb Z\).
\end{enumerate}
\end{prop}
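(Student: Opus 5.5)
Since all three parts are quoted from \cite{BCGW:lpaolg}, the plan is to reprove them in the order (2), (3), (1): the multiplication formula is needed for the spanning statement, and nonvanishing needs a concrete model.

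For (2), I first treat single letters. From relation (3) together with relation (2) I get $s_b^*s_a=\delta_{a,b}p_{r(a)}$ and $p_As_a=s_ap_{r(A,a)}$. Induction on length then gives, for $\beta,\gamma\in\mathcal L^*$,
\[
s_\beta^*s_\gamma=\begin{cases} p_{r(\beta)}s_{\gamma'}, & \gamma=\beta\gamma',\\ s_{\beta'}^*p_{r(\gamma)}, & \beta=\gamma\beta',\\ 0, & \text{otherwise.}\end{cases}
\]
Here one needs $p_{r(\beta)}s_{\gamma'}=s_{\gamma'}p_{r(\beta\gamma')}$, which follows by iterating relation (2) and using $r(r(\beta),\gamma')=r(\beta\gamma')$. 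Now take $A\subseteq r(\beta)$ and consider the case $\gamma=\beta\gamma'$. Then
\[
p_As_\beta^*s_\gamma p_B=p_Ap_{r(\beta)}s_{\gamma'}p_B=p_As_{\gamma'}p_B=s_{\gamma'}p_{r(A,\gamma')}p_B=s_{\gamma'}p_{r(A,\gamma')\cap B},
\]
using relation (1). The case $\beta=\gamma\beta'$ is symmetric, using the involution. No obstacle is expected here beyond bookkeeping with the empty word $\omega$.

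For (3), the span $S$ of the monomials $s_\alpha p_As_\beta^*$ contains all generators. Indeed $p_A=s_\omega p_As_\omega^*$, while relations (3) and (4) give $s_a=s_as_a^*s_a=s_ap_{r(a)}$, so $s_a=s_ap_{r(a)}s_\omega^*$, and similarly for $s_a^*$. By (2), $S$ is closed under multiplication, so $S$ is the whole algebra. For the grading, I would give the free algebra on the generators the $\mathbb Z$-grading with $\deg p_A=0$, $\deg s_a=1$, $\deg s_a^*=-1$. Every defining relation is homogeneous, so the quotient inherits the grading. Each monomial is homogeneous of degree $|\alpha|-|\beta|$, which yields the stated description of the homogeneous components.

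Part (1) is the main obstacle: it needs a representation in which $p_A\neq0$ for every nonempty $A$. My first choice is a concrete module. Let $V$ be the $K$-vector space with basis the pairs $(\text{labelled path},\text{vertex})$, or better the free module over vertices reachable along labelled paths. Let $p_A$ act as the projection onto the vertices lying in $A$, and let $s_a$ prepend an $a$-labelled edge, i.e.\ a branching-system style representation. Weak left-resolvingness should give relation (2), and relations (3)–(4) should follow from the construction.

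The delicate part is relation (5) at regular sets. To handle it, I would instead take $V$ spanned by the set $E^{\leq\infty}$ of finite paths ending at sinks together with infinite paths, or use the tight spectrum/partial crossed product model of \cite{BCGW:lpaolg}. In that model, each nonempty $A$ contains a vertex giving a nonzero vector supported in $A$, provided one chooses a path starting in $A$ that is either infinite or ends where $A$ fails to be regular; regularity guarantees such a path exists. Then $p_A\neq0$, hence $s_a^*s_a=p_{r(a)}\neq0$, so $s_a\neq0$ and $s_a^*\neq0$.
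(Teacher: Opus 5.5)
Your arguments for (2) and (3) are correct. The paper gives no proof of this proposition and simply quotes \cite{BCGW:lpaolg}; your direct route (the formula for $s_\beta^*s_\gamma$ by induction on length, closure of the span of the monomials under products, and homogeneity of the defining relations in the free algebra) is the standard one.

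The gap is in (1): neither model you sketch satisfies the defining relations of a general normal labelled space.

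Take first the most favourable reading of your first model. The basis is $(\alpha,v)$ with $\alpha\in\mathcal L^*$ and $v\in r(\alpha)$; $s_a$ prepends the letter $a$; and $p_A(\alpha,v)=(\alpha,v)$ exactly when $v\in r(A,\alpha)$. This does satisfy relations (1)--(4). Note that weak left-resolvingness is what gives $p_Ap_B=p_{A\cap B}$ here, not relation (2). However, (5) fails on $(\omega,v)$ whenever $v$ lies in a regular set $A$: the left side fixes $(\omega,v)$, while $s_a^*(\omega,v)=0$ kills the right side.

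Your repair lets $s_a$ prepend $a$-labelled edges to paths in $E^{\le\infty}$ and lets $p_A$ test the source vertex. This breaks (2) and (3), because weak left-resolvingness does not stop a vertex from receiving several edges with the same label. Take $E^0=\{x,u_1,u_2,w\}$, an edge $x\to u_1$ labelled $b$, edges $e_i\colon u_i\to w$ ($i=1,2$) both labelled $a$, and $\mathcal B=\{\emptyset,\{u_1\},\{w\},\{u_1,w\}\}$. This space is normal, since $u_2$ lies in no set of $\mathcal B$. On the boundary path $w$ you get $s_a^*s_a\cdot w=2w\neq p_{r(a)}\cdot w$, and $p_{\{u_1\}}s_a\cdot w=e_1\neq e_1+e_2=s_ap_{r(\{u_1\},a)}\cdot w$.

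The repair also still fails (5), because $\mathcal B$ need not separate vertices, so a regular set can contain a sink. Take $E^0=\{x,v_1,v_2,y\}$, edges $x\to v_1$ and $x\to v_2$ labelled $c$, an edge $v_1\to y$ labelled $b$, and $\mathcal B=\{\emptyset,\{v_1,v_2\},\{y\},\{v_1,v_2,y\}\}$. Then $A=\{v_1,v_2\}$ is regular, yet $p_A\cdot v_2=v_2$ while $s_bp_{\{y\}}s_b^*\cdot v_2=0$.

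What is missing is that the points of the representation space must be labelled paths decorated by (ultra)filters in the Boolean algebras $\mathcal B_\alpha$ (the boundary path space, or tight spectrum), not vertices or edge paths. Relation (5) then corresponds to tightness. The substance of (1) is showing that over every nonempty $A\in\mathcal B$ there is a tight point. In outline:
\begin{enumerate}
\item Take an ultrafilter $\xi$ in $\mathcal B$ containing $A$.
\item If $\xi$ contains a regular set $C$, write $\mathcal L(CE^1)=\{a_1,\dots,a_k\}$. Regularity of $C$, applied to $C\cap D_1\cap\cdots\cap D_k$, gives some $a\in\mathcal L(CE^1)$ with $r(D,a)\neq\emptyset$ for all $D\in\xi$.
\item By weak left-resolvingness, $\{r(D,a):D\in\xi\}$ is a filter base in $\mathcal B_a$. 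Extend it to an ultrafilter and iterate.
\item The process ends either at a stage whose ultrafilter contains no regular set, or with an infinite labelled path carrying a compatible sequence of ultrafilters.
\end{enumerate}

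Your fallback to the tight spectrum model of \cite{BCGW:lpaolg} amounts to citing \cite[Lemma 4.12]{BCGW:lpaolg}, which is all the paper does. Your description of that model (``each nonempty $A$ contains a vertex giving a nonzero vector'') is not how it works, so as written your proposal does not establish (1).
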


A monomial in \(L_K(E,\mathcal L,\mathcal B)\) is called a {\it real path} if it
contains no factor of the form \(s_a^*\), and it is called a {\it ghost path} if it
contains no factor of the form \(s_a\), for any \(a\in\mathcal A\). An element
\(x\in L_K(E,\mathcal L,\mathcal B)\) is said to be in {\it only real edges}
(respectively, in {\it only ghost edges}) if it is a \(K\)-linear combination of real
paths (respectively, ghost paths).

\section{The reduction theorem}

The main aim of this section is to establish the reduction theorem for
Leavitt labelled path algebras (Theorem~\ref{thm:reduction}). We begin with
a reduction for elements involving only real edges.

\begin{lem}\label{only real edges}
Let \((E,\mathcal L,\mathcal B)\) be a normal labelled space and \(K\) a field,
and let \(\alpha\in L_K(E,\mathcal L,\mathcal B)\) be a nonzero polynomial in
only real edges. Then there exist elements
\(a,b\in L_K(E,\mathcal L,\mathcal B)\), a nonempty set \(A\in\mathcal B\), and
a nonzero scalar \(k\in K\) such that
\[
a\alpha b
=
kp_A+\sum_{i=1}^n k_i s_{\alpha_i}p_{A_i},
\]
where \(k_i\in K\), \(\alpha_i\in\mathcal L^{\geq1}\), and
\(A_i\in\mathcal B\setminus\{\emptyset\}\), with
\(A_i\subseteq r(A,\alpha_i)\cap A\) for all \(1\leq i\leq n\).
\end{lem}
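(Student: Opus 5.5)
Write the nonzero element in only real edges as
\[
\alpha=\sum_{j=1}^m c_j s_{\beta_j}p_{B_j},
\]
with \(c_j\in K\setminus\{0\}\), \(\beta_j\in\mathcal L^*\) and \(\emptyset\neq B_j\in\mathcal B_{\beta_j}\). By Proposition~\ref{properties}(3), and since each real monomial \(s_{a_1}p_{A_1}s_{a_2}\cdots\) can be rewritten with all projections pushed to the right via relation (2), every polynomial in only real edges has this form. The plan is to first make the pairs \((\beta_j,B_j)\) ``disjoint'', and then multiply on the left by \(s_\gamma^*\) for a shortest word \(\gamma\) and on the right by a suitable projection.

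\textbf{Normalize the expression.} Group terms with equal \(\beta_j\); using normality (\(\mathcal B_{\beta}\) is a Boolean algebra) and relation (1), refine the sets \(B_j\) with a fixed \(\beta\) into pairwise disjoint atoms of the finite Boolean subalgebra they generate. This rewrites \(\sum_j c_j p_{B_j}\) as \(\sum_t d_t p_{C_t}\) with the \(C_t\) disjoint and nonempty, and we drop any terms with \(d_t=0\). Since \(\alpha\neq0\), at least one term survives. Now choose a term \(s_\gamma p_C\) with \(|\gamma|\) minimal among the surviving words, with coefficient \(k\neq0\).

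\textbf{Left and right multiplication.} Multiply on the left by \(s_\gamma^*\). By Proposition~\ref{properties}(2), \(s_\gamma^*s_{\beta}p_B\) is zero unless \(\beta=\gamma\beta'\), since no \(\beta\) is shorter than \(\gamma\), and in that case it equals \(p_{r(\gamma)}s_{\beta'}p_B=s_{\beta'}p_{r(r(\gamma),\beta')\cap B}\). Here \(\beta'=\omega\) only for \(\beta=\gamma\), and those terms give \(\sum_t d_t p_{C_t}\) with disjoint \(C_t\). Then multiply on the right by \(p_C\). The \(\omega\)-terms collapse to \(k p_C\) by disjointness. Each remaining term becomes
\[
s_{\beta'}p_{B'}p_C=s_{\beta'}p_{B'\cap C},
\]
with \(B'\cap C\subseteq C\) and \(B'\cap C\subseteq r(\beta')\). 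Discard those with \(B'\cap C=\emptyset\). Set \(A=C\), \(a=s_\gamma^*\) (or \(p_C\) when \(\gamma=\omega\)), and \(b=p_C\). We obtain \(kp_A+\sum_i k_i s_{\alpha_i}p_{A_i}\) with \(\alpha_i\in\mathcal L^{\geq1}\) and \(A_i\subseteq A\), and \(kp_A\neq0\) by Proposition~\ref{properties}(1).

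\textbf{Main obstacle: the condition \(A_i\subseteq r(A,\alpha_i)\).} So far we only have \(A_i\subseteq r(\alpha_i)\cap A\), whereas the statement needs the sharper containment \(A_i\subseteq r(A,\alpha_i)\). To get it, multiply on the left by \(p_A\) as well. By relation (2),
\[
p_A s_{\alpha_i}p_{A_i}=s_{\alpha_i}p_{r(A,\alpha_i)\cap A_i},
\]
while \(p_Ap_A=p_A\). So replacing \(a\) by \(p_A s_\gamma^*\) (which amounts to using \(p_Cs_\gamma^*\)) replaces each \(A_i\) by \(r(A,\alpha_i)\cap A_i\). This set is contained in \(r(A,\alpha_i)\cap A\), and we again discard the empty ones. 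The genuinely delicate point is the earlier normalization: the \(\omega\)-terms must reduce to a single nonzero multiple of one projection, which is exactly what the disjoint refinement inside the Boolean algebra \(\mathcal B_\gamma\) guarantees.
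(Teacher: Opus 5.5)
Your argument is correct and is essentially the paper's proof. The paper also either keeps the projection part or multiplies on the left by $s_{\alpha_1}^*$ for a shortest word. It then rewrites the resulting projection part as $\sum_i g_ip_{D_i}$ with disjoint nonempty $D_i$ (using Lemma 3.5 of Boava et al.) and sandwiches with $p_{D_1}$ on both sides, which matches your choice $a=p_Cs_\gamma^*$, $b=p_C$.

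The differences are cosmetic. You treat the case $\gamma=\omega$ together with the others, and you do the disjoint refinement and discard zero coefficients before choosing the shortest word. That ordering makes explicit that the isolated term $kp_C$ cannot cancel, a point the paper leaves implicit.
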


\begin{proof}
Since \(\alpha\) is a nonzero polynomial in only real edges, Proposition~\ref{properties}(3)
allows us to write
\[
\alpha
=
\sum_{i=1}^n h_i p_{B_i}
+
\sum_{j=1}^m k_j s_{\alpha_j}p_{A_j},
\]
where \(h_i,k_j\in K\), \(\alpha_j\in\mathcal L^{\geq1}\), and
\(B_i,A_j\in\mathcal B\setminus\{\emptyset\}\), with
\(A_j\subseteq r(\alpha_j)\) for all \(j\).

Suppose first that \(h_i\neq0\) for some \(i\). Let
\(J=\{i\mid h_i\neq0\}\). By \cite[Lemma 3.5]{BCGW:lpaolg}, there exist
nonzero scalars \(l_1,\ldots,l_s\in K\) and pairwise disjoint nonempty sets
\(C_1,\ldots,C_s\in\mathcal B\) such that
\[
\sum_{i\in J}h_i p_{B_i}=\sum_{i=1}^s l_i p_{C_i}.
\]
Thus
\[
p_{C_1}\alpha p_{C_1}
=
l_1p_{C_1}
+
\sum_{j=1}^m k_j s_{\alpha_j}
p_{A_j\cap C_1\cap r(C_1,\alpha_j)}.
\]
After discarding the zero terms, this has the desired form.

Now suppose that \(h_i=0\) for all \(i\). Then
\(\alpha=\sum_{j=1}^m k_j s_{\alpha_j}p_{A_j}\neq0\). After reordering and
removing zero coefficients, we may assume that
\begin{center}
\(0<|\alpha_1|\leq|\alpha_2|\leq\cdots\leq|\alpha_m|\) and \(k_j\neq0\) for
all \(j\). 
\end{center}
Multiplying on the left by \(s_{\alpha_1}^*\), Proposition~\ref{properties}(2)
gives
\[
s_{\alpha_1}^*s_{\alpha_j}
=
\begin{cases}
p_{r(\alpha_1)}s_{\alpha_j'}, & \text{if } \alpha_j=\alpha_1\alpha_j'
\text{ and } |\alpha_j'|\geq1,\\
p_{r(\alpha_1)}, & \text{if } \alpha_j=\alpha_1,\\
0, & \text{otherwise.}
\end{cases}
\]
Hence, after reindexing,
\[
s_{\alpha_1}^*\alpha
=
\sum_{j=1}^{m_1} k_j p_{A_j}
+
\sum_{j=1}^{m_2} k_j s_{\alpha_j'}p_{A_j},
\]
where the first sum corresponds to the indices with \(\alpha_j=\alpha_1\), and
the second to those with \(\alpha_j=\alpha_1\alpha_j'\), \(|\alpha_j'|\geq1\).
Applying \cite[Lemma 3.5]{BCGW:lpaolg} to the projection part, there exist
nonzero scalars \(g_1,\ldots,g_t\in K\) and pairwise disjoint nonempty sets
\(D_1,\ldots,D_t\in\mathcal B\) such that
\[
\sum_{j=1}^{m_1}k_jp_{A_j}=\sum_{i=1}^t g_i p_{D_i}.
\]
Therefore
\[
p_{D_1}s_{\alpha_1}^*\alpha p_{D_1}
=
g_1p_{D_1}
+
\sum_{j=1}^{m_2} k_j s_{\alpha_j'}
p_{A_j\cap D_1\cap r(D_1,\alpha_j')}.
\]
After discarding the zero terms, this also has the desired form.
\end{proof}

We next introduce the notion of labelled cycles that will be used in the
reduction argument.

\begin{defn}\label{def:cycle}
Let \((E,\mathcal L,\mathcal B)\) be a normal labelled space.
\begin{enumerate}
\item
A pair \((\alpha,A)\), with \(\alpha\in\mathcal L^{\geq1}\) and
\(A\in\mathcal B_\alpha\), is called a {\it cycle} if, for every
\(B\in\mathcal B_\alpha\) with \(B\subseteq A\), one has
\(B\subseteq r(B,\alpha)\).

\item
A cycle \((\alpha,A)\) has an {\it exit} if there exist \(0\leq k\leq|\alpha|\) and
a nonempty set \(B\in\mathcal B\) such that
\(B\subseteq r(A,\alpha_{1,k})\) and
\(\mathcal L(BE^1)\neq\{\alpha_{k+1}\}\), where
\(\alpha_{|\alpha|+1}=\alpha_1\).
\end{enumerate}
\end{defn}

\begin{rem}\label{rem:cycle}
The notion of labelled cycles in labelled spaces was introduced in
\cite[Definition 8.1]{BCGW:lpaolg}. There, a pair \((\alpha,A)\), with
\(\alpha\in\mathcal L^{\geq1}\) and \(A\in\mathcal B_\alpha\), is a cycle if
\(B=r(B,\alpha)\) for every \(B\in\mathcal B_\alpha\) with \(B\subseteq A\).
This notion encompasses the usual notion of cycles in directed graphs; see,
for example, \cite[Definitions 2.0.2]{AAS}. However, it does not cover the
notion of cycles in ultragraphs; see, for example, \cite[page 6]{NN:pisulpa}.
The definition above is weaker and covers both the graph and ultragraph
notions of cycle.
\end{rem}

To clarify Definition \ref{def:cycle} and Remark \ref{rem:cycle}, we present the following example.

\begin{exas}\label{exa:weak-cycle}
Let \(E\) be the labelled graph with vertices $
E^0=\{v,w\},$
two edges \(e_1,e_2\), both labelled by \(a\), with
\[
s(e_1)=s(e_2)=v,\qquad r(e_1)=v,\qquad r(e_2)=w.
\]
Let \(\mathcal A=\{a\}\), let \(\mathcal L(e_1)=\mathcal L(e_2)=a\), and take
\(\mathcal B=\mathcal P(E^0)\). Then \((E,\mathcal L,\mathcal B)\) is a normal
labelled space.

Consider \(A=\{v\}\). We have
 $r(A,a)=\{v,w\}.$
Thus \(A\subseteq r(A,a)\), and since the only nonempty subset of \(A\) is
\(A\) itself, the pair \((a,A)\) is a cycle in the sense of
Definition~\ref{def:cycle}. However,
$
r(A,a)=\{v,w\}\neq A.$
Therefore \((a,A)\) is not a cycle in the sense of
\cite[Definition 8.1]{BCGW:lpaolg}. This illustrates that
Definition~\ref{def:cycle} is strictly weaker than the cycle definition used in
\cite{BCGW:lpaolg}.
\end{exas}

The following elementary observation will be used repeatedly.

\begin{lem}\label{cycle}
Let \((E,\mathcal L,\mathcal B)\) be a normal labelled space and let
\((\alpha,A)\) be a cycle. Then:
\begin{enumerate}
\item \((\alpha^n,A)\) is a cycle for every \(n\geq1\);
\item \(p_As_\alpha^n p_A=s_\alpha^n p_A\) for every \(n\geq1\).
\end{enumerate}
\end{lem}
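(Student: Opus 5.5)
Part (1) is proved by induction on \(n\), and part (2) follows from it. The basic tools are the relations \(p_Bs_a=s_ap_{r(B,a)}\) and the multiplication rule of Proposition~\ref{properties}(2).

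For (1), fix \(n\geq1\) and first check that \(A\in\mathcal B_{\alpha^n}\), that is, \(A\subseteq r(\alpha^n)\). Since \((\alpha,A)\) is a cycle and \(A\in\mathcal B_\alpha\) with \(A\subseteq A\), we get \(A\subseteq r(A,\alpha)\). Applying \(r(\cdot,\alpha)\), which is monotone, gives \(r(A,\alpha)\subseteq r(r(A,\alpha),\alpha)=r(A,\alpha^2)\). Iterating yields
\[
A\subseteq r(A,\alpha)\subseteq r(A,\alpha^2)\subseteq\cdots\subseteq r(A,\alpha^n)\subseteq r(\alpha^n).
\]
Here we use the identity \(r(r(B,\beta),\gamma)=r(B,\beta\gamma)\), which is immediate from the definition of relative range via concatenation of paths.

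Next let \(B\in\mathcal B_{\alpha^n}\) with \(B\subseteq A\). Then \(B\subseteq A\subseteq r(\alpha)\), so \(B\in\mathcal B_\alpha\), and the cycle property gives \(B\subseteq r(B,\alpha)\). The same monotone iteration as above gives \(B\subseteq r(B,\alpha^n)\). Hence \((\alpha^n,A)\) is a cycle.

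For (2), since \(\alpha^n\) is again a cycle by (1), it suffices to prove \(p_As_\beta p_A=s_\beta p_A\) for any cycle \((\beta,A)\) and then take \(\beta=\alpha^n\), noting \(s_\alpha^n=s_{\alpha^n}\). Repeated use of relation (2) of Definition~\ref{Def} letter by letter gives \(p_As_\beta=s_\beta p_{r(A,\beta)}\). Therefore
\[
p_As_\beta p_A=s_\beta p_{r(A,\beta)}p_A=s_\beta p_{r(A,\beta)\cap A}=s_\beta p_A,
\]
where the last equality uses \(A\subseteq r(A,\beta)\).

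The only mildly delicate points are the identity \(r(r(A,\beta),\gamma)=r(A,\beta\gamma)\) and the monotonicity of relative ranges. Both follow directly from the definition, so this lemma is not expected to present a real obstacle.
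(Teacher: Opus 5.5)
Your proposal is correct and follows essentially the same route as the paper: (1) comes from monotonicity of $r(\cdot,\alpha)$ together with $r(r(B,\beta),\gamma)=r(B,\beta\gamma)$, and (2) comes from $p_As_{\alpha^n}=s_{\alpha^n}p_{r(A,\alpha^n)}$ combined with $A\subseteq r(A,\alpha^n)$. The only cosmetic difference is that you iterate the base inclusion $B\subseteq r(B,\alpha)$ directly, whereas the paper invokes the inductive hypothesis for $(\alpha^{n-1},A)$.
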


\begin{proof}
We prove (1) by induction on \(n\). The case \(n=1\) is immediate. Suppose
that \((\alpha^{n-1},A)\) is a cycle for some \(n\geq2\). Since
\(A\subseteq r(A,\alpha^{n-1})\) and \(A\subseteq r(A,\alpha)\), we have
\[
A\subseteq r(A,\alpha)\subseteq r(r(A,\alpha^{n-1}),\alpha)=r(A,\alpha^n).
\]
Let \(B\in\mathcal B_{\alpha^n}\) be nonempty with \(B\subseteq A\). Since
\(A\in\mathcal B_{\alpha^{n-1}}\cap\mathcal B_\alpha\), we have
\(B\in\mathcal B_{\alpha^{n-1}}\cap\mathcal B_\alpha\). Hence the cycle
properties of \((\alpha^{n-1},A)\) and \((\alpha,A)\) give
\(B\subseteq r(B,\alpha^{n-1})\) and \(B\subseteq r(B,\alpha)\). Therefore
\[
B\subseteq r(B,\alpha)
\subseteq r(r(B,\alpha^{n-1}),\alpha)
=
r(B,\alpha^n).
\]
Thus \((\alpha^n,A)\) is a cycle.

For (2), by (1), \(A\subseteq r(A,\alpha^n)\) for every \(n\geq1\). Hence
\[
p_As_\alpha^n p_A
=
s_\alpha^n p_{r(A,\alpha^n)\cap A}
=
s_\alpha^n p_A.
\]
\end{proof}

The next lemma is the key periodicity fact needed to handle powers of a cycle.

\begin{lem}\label{gcd cycle}
Let \((E,\mathcal L,\mathcal B)\) be a normal labelled space, let
\(A\in\mathcal B\) be nonempty, and let \(\alpha\in\mathcal L^{\geq1}\).
Suppose that \(n\geq2\) and \(l_1,\ldots,l_n\) are positive integers such that
\((\alpha^{l_i},A)\) is a cycle for every \(i=1,\ldots,n\). Then
\((\alpha^l,A)\) is a cycle, where \(l=\gcd(l_1,\ldots,l_n)\).
\end{lem}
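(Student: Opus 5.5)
The plan is to reduce the statement to two facts: an additivity property of the cycle condition under concatenation of powers, and the elementary number-theoretic fact that every sufficiently large multiple of \(l=\gcd(l_1,\ldots,l_n)\) is a nonnegative integer combination of \(l_1,\ldots,l_n\). Recall that \((\alpha^l,A)\) being a cycle means two things: \(A\in\mathcal B_{\alpha^l}\), and \(B\subseteq r(B,\alpha^l)\) for every \(B\in\mathcal B_{\alpha^l}\) with \(B\subseteq A\).

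\textbf{The membership \(A\in\mathcal B_{\alpha^l}\).} Since \(l\mid l_1\), we have
\[
A\subseteq r(\alpha^{l_1})=r\bigl(r(\alpha^{l_1-l}),\alpha^l\bigr)\subseteq r(\alpha^l),
\]
with the convention \(r(\omega)=E^0\) when \(l_1=l\). Consequently every \(B\in\mathcal B\) with \(B\subseteq A\) lies in \(\mathcal B_{\alpha^{l_i}}\) for all \(i\) and in \(\mathcal B_{\alpha^l}\). So in the cycle conditions we may quantify simply over sets \(B\in\mathcal B\) with \(B\subseteq A\).

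\textbf{The additive semigroup of cycle exponents.} Let \(S\) be the set of positive integers \(k\) such that \(B\subseteq r(B,\alpha^{k})\) for every \(B\in\mathcal B\) with \(B\subseteq A\). I will show \(S\) is closed under addition, by the computation already used in Lemma~\ref{cycle}(1). If \(k,k'\in S\) and \(B\subseteq A\), then monotonicity of relative ranges gives
\[
B\subseteq r(B,\alpha^{k'})\subseteq r\bigl(r(B,\alpha^{k}),\alpha^{k'}\bigr)=r(B,\alpha^{k+k'}).
\]
Since each \(l_i\in S\), the set \(S\) contains the numerical semigroup generated by the \(l_i\). By the standard Frobenius/B\'ezout argument, there is \(N\) such that \(ml\in S\) for all \(m\geq N\). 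In particular both \(ml\) and \((m+1)l\) lie in \(S\) for large \(m\).

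\textbf{The main obstacle: descending from large multiples to \(l\).} It remains to get from "\(ml\in S\) for all large \(m\)" down to \(l\in S\). Fix \(B\subseteq A\) in \(\mathcal B\), and suppose, for a contradiction, that \(B':=B\setminus r(B,\alpha^l)\) is nonempty. Normality ensures \(B'\in\mathcal B\), and clearly \(B'\subseteq A\). The first attempt is to apply the cycle property with exponent \((m+1)l\) to suitably chosen subsets of \(B'\). Applied to \(B'\) itself, it gives
\[
B'\subseteq r\bigl(r(B',\alpha^{ml}),\alpha^l\bigr).
\]
I would try to intersect the intermediate set \(r(B',\alpha^{ml})\) with \(A\), and with \(B\), so that the exponent-\(ml\) cycle property applies to it as well. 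Weak left-resolvingness should let intersections commute with \(r(\cdot,\alpha^{j})\). The aim is to show that some vertex of \(B'\) is reached from \(B\) by a path labelled \(\alpha^l\), contradicting the definition of \(B'\).

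This is the step where I expect the difficulty to lie. The intermediate sets \(r(B',\alpha^{ml})\) need not be contained in \(A\), so the cycle hypothesis cannot be applied to them directly. Controlling them is the heart of the proof. If the direct descent fails, my fallback is to iterate along the Euclidean algorithm on pairs of exponents in \(S\), replacing the pair \((k,k')\) by \((k,k'-k)\). This would require showing that \(S\) is closed under taking differences when they are positive, again by restricting to subsets of \(A\) and using relative complements in \(\mathcal B\).
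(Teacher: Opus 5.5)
Your preliminary steps are correct: $A\in\mathcal B_{\alpha^l}$, $S$ is closed under addition, and $ml\in S$ for all large $m$. However, the proof stops exactly at the step that carries the content of the lemma, and the route you sketch there does not work. Applying the exponent-$(m+1)l$ property to $B'$ and splitting it as $\alpha^{ml}$ followed by $\alpha^l$ only says that $B'$ is reached by $\alpha^l$ from $r(B',\alpha^{ml})$. As you noticed, that set cannot be controlled. Your fallback, closure of $S$ under positive differences, is true, but proving it needs the same missing idea, so it is not an independent way around the obstacle.

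The missing ingredient is this consequence of weak left-resolvingness: relative ranges of \emph{disjoint} members of $\mathcal B$ under a common word are disjoint, since $r(X,\gamma)\cap r(Y,\gamma)=r(X\cap Y,\gamma)=\emptyset$. With it, your setup closes at once, and you never need to apply the cycle property to sets outside $A$.

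Take $B\in\mathcal B$ with $B\subseteq A$, and suppose $B'=B\setminus r(B,\alpha^l)\neq\emptyset$. Then $B'$ and $r(B,\alpha^l)$ are disjoint members of $\mathcal B$. Apply the exponent-$ml$ property to $B'$ and the exponent-$(m+1)l$ property to $B$, splitting the latter as $\alpha^l$ first and then $\alpha^{ml}$:
\[
B'\subseteq r(B',\alpha^{ml}),\qquad B'\subseteq B\subseteq r(B,\alpha^{(m+1)l})=r\bigl(r(B,\alpha^l),\alpha^{ml}\bigr).
\]
Therefore
\[
B'\subseteq r(B',\alpha^{ml})\cap r\bigl(r(B,\alpha^l),\alpha^{ml}\bigr)=r\bigl(B'\cap r(B,\alpha^l),\alpha^{ml}\bigr)=\emptyset,
\]
a contradiction, so $l\in S$. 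The same computation with any $k<k'$ in $S$ shows $k'-k\in S$, which is your fallback.

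This is exactly the paper's argument. The paper runs it for two exponents using a relation $sl_1+l=tl_2$, with $sl_1$ and $tl_2$ in place of your $ml$ and $(m+1)l$, and then inducts on $n$. Your numerical-semigroup framing avoids that induction and works perfectly well once this step is supplied.
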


\begin{proof}
We prove the result by induction on \(n\). First consider the case \(n=2\).
Let \(l=\gcd(l_1,l_2)\). Without loss of generality, there exist positive
integers \(s,t\) such that \(sl_1+l=tl_2\).

Suppose that there exists a nonempty \(B\in\mathcal B\), with \(B\subseteq A\),
such that \(B\nsubseteq r(B,\alpha^l)\). Put \(D=B\setminus r(B,\alpha^l)\).
Then \(D\in\mathcal B\), \(D\neq\emptyset\), \(D\subseteq B\subseteq A\), and
\(D\cap r(B,\alpha^l)=\emptyset\). By Lemma~\ref{cycle},
\((\alpha^{sl_1},A)\) is a cycle, so \(D\subseteq r(D,\alpha^{sl_1})\).
Since the labelled space is weakly left-resolving,
\[
r(D,\alpha^{sl_1})\cap r(r(B,\alpha^l),\alpha^{sl_1})=\emptyset.
\]
But \(r(r(B,\alpha^l),\alpha^{sl_1})=r(B,\alpha^{l+sl_1})
=r(B,\alpha^{tl_2})\). On the other hand, \((\alpha^{tl_2},A)\) is a cycle,
again by Lemma~\ref{cycle}, so \(B\subseteq r(B,\alpha^{tl_2})\). Thus
\(D\subseteq r(D,\alpha^{sl_1})\cap r(B,\alpha^{tl_2})\), a contradiction.
Therefore \(B\subseteq r(B,\alpha^l)\) for every nonempty
\(B\in\mathcal B\) with \(B\subseteq A\). Taking \(B=A\), we obtain
\(A\subseteq r(A,\alpha^l)\subseteq r(\alpha^l)\), and hence
\(A\in\mathcal B_{\alpha^l}\). Thus \((\alpha^l,A)\) is a cycle.

For the induction step, assume the result holds for \(n-1\) integers and
suppose that \((\alpha^{l_i},A)\) is a cycle for \(i=1,\ldots,n\). Let
\(d=\gcd(l_1,\ldots,l_{n-1})\). By the induction hypothesis,
\((\alpha^d,A)\) is a cycle. Applying the two-variable case to \(d\) and
\(l_n\), we obtain that \((\alpha^{\gcd(d,l_n)},A)\) is a cycle. Since
\(\gcd(d,l_n)=\gcd(l_1,\ldots,l_n)\), the result follows, thus finishing the proof.
\end{proof}

Using Lemmas~\ref{only real edges}, \ref{cycle}, and \ref{gcd cycle}, we now
prove the real-edge case of the reduction theorem.

\begin{lem}\label{d=0}
Let \((E,\mathcal L,\mathcal B)\) be a normal labelled space and \(K\) a field,
and let \(\alpha\in L_K(E,\mathcal L,\mathcal B)\) be a nonzero polynomial in
only real edges. Then there exist elements \(a,b\in L_K(E,\mathcal L,\mathcal B)\)
and a nonempty set \(A\in\mathcal B\) such that either:
\begin{enumerate}
\item \(0\neq a\alpha b=kp_A\) for some \(k\in K\setminus\{0\}\); or
\item \(0\neq a\alpha b=\sum_{i=0}^n k_i s_c^i p_A\), where
\(n\in\mathbb N\), \(k_i\in K\), \((c,A)\) is a cycle without exit, and
\(s_c^0:=p_A\).
\end{enumerate}
\end{lem}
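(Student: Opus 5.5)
First, I apply Lemma~\ref{only real edges} to reduce \(\alpha\) to an element of the form
\[
x=kp_A+\sum_{i=1}^n k_i s_{\alpha_i}p_{A_i},\qquad k\neq0,\ A_i\subseteq r(A,\alpha_i)\cap A .
\]
Then I argue by induction on the number \(n\) of non-projection terms; among all reductions of this form I take one with \(n\) minimal.

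If \(n=0\), we are in case (1). Otherwise I try to lower \(n\) by shrinking \(A\). I multiply on both sides by \(p_B\) for a nonempty \(B\in\mathcal B\) with \(B\subseteq A\), and use \(p_Bs_{\alpha_i}p_{A_i}p_B=s_{\alpha_i}p_{r(B,\alpha_i)\cap A_i\cap B}\). This kills the \(i\)-th term whenever \(r(B,\alpha_i)\cap A_i\cap B=\emptyset\). Normality lets me take relative complements, so minimality of \(n\) forces the following for each \(i\) and every nonempty \(B\subseteq A\): the set \(r(B,\alpha_i)\cap A_i\cap B\) is nonempty. In particular, taking \(B=A\setminus A_i\) would kill term \(i\) unless that set is empty, which gives \(A_i=A\).

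Next I want to show \(B\subseteq r(B,\alpha_i)\). Suppose \(D=B\setminus r(B,\alpha_i)\neq\emptyset\). By weak left-resolvingness, \(r(D,\alpha_i)\subseteq r(B,\alpha_i)\), so \(r(D,\alpha_i)\cap D=\emptyset\). Cutting down by \(p_D\) then kills term \(i\), contradicting minimality. Hence each \((\alpha_i,A)\) is a cycle in the sense of Definition~\ref{def:cycle}.

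The next step is to show that all the \(\alpha_i\) are powers of a single word. Since \(p_Ax\neq0\) involves \(s_{\alpha_i}p_A\) with \(A\subseteq r(A,\alpha_i)\), the infinite paths behave periodically. Concretely, I compare \(\alpha_i\) and \(\alpha_j\) through the cycles \((\alpha_i\alpha_j,A)\) and \((\alpha_j\alpha_i,A)\), and argue that \(\alpha_i^{|\alpha_j|}=\alpha_j^{|\alpha_i|}\) as labelled paths. If this fails, some exit-like branching exists, and I can separate them as follows. Assume \(\alpha_1\) has minimal length and is not a beginning of \(\alpha_j\) after powering. I multiply on the left by \(s_{\alpha_1^N}^*\) and on the right by \(s_{\alpha_1^N}\) for large \(N\), since conjugating by the cycle keeps \(p_A\) fixed by Lemma~\ref{cycle}(2). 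Terms whose labels are incomparable with \(\alpha_1^\infty\) then vanish, lowering \(n\). By the Lyndon--Schützenberger theorem on words, the surviving \(\alpha_i\) are all powers \(c^{l_i}\) of a primitive word \(c\). Lemma~\ref{gcd cycle}, together with the fact that each \((c^{l_i},A)\) is a cycle, then shows that \((c^l,A)\) is a cycle with \(l=\gcd(l_i)\). Replacing \(c\) by \(c^l\), I get \(x=kp_A+\sum k_i s_c^{m_i}p_A\), which is a polynomial in \(s_c\) times \(p_A\).

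It remains to handle exits. Suppose \((c,A)\) has an exit: there are \(k\) and a nonempty \(B\subseteq r(A,c_{1,k})\) with \(\mathcal L(BE^1)\neq\{c_{k+1}\}\). Then either some label \(b\neq c_{k+1}\) leaves \(B\), or \(B\) is a sink-like set with no outgoing labels. In the first case I multiply on the left by \(s_{c_{1,k}b}^*\) and on the right by \(s_{c_{1,k}}p_{B}\) (and further by \(s_b\) and \(p_{r(B,b)}\)). This kills every term \(s_c^{m}\) with \(m\geq1\) and leaves a nonzero multiple of a projection \(p_{r(B,b)}\), giving case (1). The sink case I handle similarly: right multiplication by \(s_{c_{1,k}}p_B\) and left multiplication by \(p_Bs^*_{c_{1,k}}\) annihilates the higher terms, since \(s_{c_{k+1}}\)-terms vanish against \(p_B\).

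The main obstacle I expect is the word-combinatorics step. Its difficulty is justifying that the nonzero terms must have labels that are powers of a common word, while keeping the coefficient \(k\) of \(p_A\) nonzero under the conjugations by \(s_{\alpha_1^N}\).
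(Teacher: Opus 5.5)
Your architecture is the paper's: reduce by Lemma~\ref{only real edges}, cut down by projections until each $(\alpha_i,A)$ is a cycle, conjugate by a high power of $s_{\alpha_1}$, apply Lemma~\ref{gcd cycle}, then dispose of exits. Running this on a reduction with minimal $n$ is a tidy replacement for the paper's atom $M$ and iterative cutting, since it gives $A_i=A$ at once. Your cutting and exit computations are correct.

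The gap is the word-combinatorics step, which is the substance of the lemma. The only vanishing criterion you state is one-sided: terms whose $\alpha_i$ is not a beginning of $\alpha_1^\infty$ die. But beginnings of $\alpha_1^\infty$ need not be powers of a common word; take $\alpha_1=ab$ and $\alpha_i=aba$. So Lyndon--Sch\"utzenberger has no equation $xy=yx$ to act on, and ``the surviving $\alpha_i$ are powers of a primitive word'' does not follow from what you wrote. The cycle property cannot supply the commutation either. Take a vertex $v$ with loops labelled $a$ and $b$, and $\mathcal B=\mathcal P(E^0)$. Then $(a,\{v\})$, $(b,\{v\})$, $(ab,\{v\})$ and $(ba,\{v\})$ are all cycles although $a\neq b$. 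So your preliminary claim $\alpha_i^{|\alpha_j|}=\alpha_j^{|\alpha_i|}$ is false in general, and all the weight falls on the conjugation.

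The missing idea is to use the right-hand factor of the conjugation as well. First sandwich with $p_A$. This also removes your worry about $k$, since $p_As_{\alpha_1^N}^*p_As_{\alpha_1^N}p_A=p_{A\cap r(A,\alpha_1^N)}=p_A$ by Lemma~\ref{cycle}.

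\begin{itemize}
\item By Proposition~\ref{properties}(2), $p_As_{\alpha_1^N}^*s_{\alpha_i}p_As_{\alpha_1^N}p_A\neq0$ requires $\alpha_1^N=\alpha_i\gamma$ \emph{and} $\gamma$ to be a beginning of $\alpha_1^N$. Thus $\alpha_1^N$ has period $|\alpha_i|$ as well as period $|\alpha_1|$.
\item Write $|\alpha_i|=q|\alpha_1|+\rho$ with $0\leq\rho<|\alpha_1|$, and $\alpha_1=uv$ with $|u|=\rho$. If $(N-1)|\alpha_1|\geq|\alpha_i|$, the two periods give $vu=uv$.
\item Lyndon--Sch\"utzenberger, together with uniqueness of primitive roots, then makes $u$ a power of $\beta$. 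Hence $\alpha_i=\alpha_1^qu$ is a power of $\beta$.
\item Conversely, if $\alpha_i=\beta^{l_i}$, the term is fixed by the conjugation, because $s_{\alpha_i}$ and $s_{\alpha_1^N}$ commute and $A\subseteq r(A,\alpha_i)$.
\end{itemize}

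Combined with minimality of $n$, this shows every $\alpha_i$ is a power of $\beta$. That is exactly what the paper proves by its Euclidean-style descent through $\alpha_i^{(1)},\alpha_i^{(2)},\ldots$. With this step filled in, the rest of your argument goes through.
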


\begin{proof}
By Lemma~\ref{only real edges}, there exist
\(a',b'\in L_K(E,\mathcal L,\mathcal B)\), a nonempty \(A\in\mathcal B\), and
\(0\neq k\in K\) such that
\[
a'\alpha b'=kp_A+\sum_{i=1}^n k_i s_{\alpha_i}p_{A_i},
\]
where \(A_i\subseteq r(A,\alpha_i)\cap A\). If the second summand is zero,
there is nothing to prove. Otherwise, remove the zero terms.

Choose a maximal nonempty subset \(J\subseteq\{1,\ldots,n\}\) such that
\(M=\bigcap_{i\in J}A_i\neq\emptyset\) and
\(M\cap A_j=\emptyset\) for every \(j\notin J\). Then \(M\subseteq A\). If
\(M\cap r(M,\alpha_i)=\emptyset\) for all \(i\), then
\(p_M(a'\alpha b')p_M=kp_M\), and we are done. Otherwise, after removing zero
terms and reindexing, we may write
\[
p_M(a'\alpha b')p_M
=
kp_M+\sum_{i=1}^m k_i s_{\alpha_i}p_{M\cap r(M,\alpha_i)},
\]
where \(k_i\neq0\) and \(M\cap r(M,\alpha_i)\neq\emptyset\) for all \(i\).

Suppose that, for some \(j\), there exists a nonempty \(B\in\mathcal B\) with
\(B\subseteq M\) and \(B\nsubseteq r(B,\alpha_j)\). Put
\(C=B\setminus r(B,\alpha_j)\). Then \(C\neq\emptyset\), \(C\subseteq M\), and
\(r(C,\alpha_j)\cap C=\emptyset\). Hence
\[
p_Cs_{\alpha_j}p_{M\cap r(M,\alpha_j)}p_C
=
s_{\alpha_j}p_{r(C,\alpha_j)\cap C}
=
0.
\]
Thus the term corresponding to \(\alpha_j\) is eliminated. Repeating this
procedure finitely many times, we obtain a nonempty set \(H\in\mathcal B\),
\(H\subseteq M\), such that either \(p_H(a'\alpha b')p_H=kp_H\), or
\[
p_H(a'\alpha b')p_H
=
kp_H+\sum_{i=1}^{m_1}k_i s_{\alpha_i}p_{H\cap r(H,\alpha_i)},
\]
where all displayed terms are nonzero and, for every \(i\) and every nonempty
\(B\in\mathcal B\) with \(B\subseteq H\), one has \(B\subseteq r(B,\alpha_i)\).
In particular, each \((\alpha_i,H)\) is a cycle. If
\(p_H(a'\alpha b')p_H=kp_H\), we are done. Otherwise, since
\(H\subseteq r(H,\alpha_i)\) for every \(i\), we may write
\[
p_H(a'\alpha b')p_H
=
kp_H+\sum_{i=1}^{m_1}k_i s_{\alpha_i}p_H,
\]
where \(0<|\alpha_1|\leq|\alpha_2|\leq\cdots\leq|\alpha_{m_1}|\),
\(k_i\neq0\), and each \((\alpha_i,H)\) is a cycle.

Let \(r:=|\alpha_{m_1}|\,|\alpha_1|\). Then
\[
p_H(s_{\alpha_1}^*)^r p_H(a'\alpha b')p_Hs_{\alpha_1}^rp_H
=
kp_H+
\sum_{i=1}^{m_1}
k_i p_H(s_{\alpha_1}^*)^rs_{\alpha_i}p_Hs_{\alpha_1}^rp_H.
\]
Write \(\alpha_1=\beta^{l_1}\), where \(|\beta|\) is minimal and \(l_1\geq1\).
We claim that, for every \(i\), either \(\alpha_i=\beta^{l_i}\) for some
\(l_i\geq1\), or
\[
k_i p_H(s_{\alpha_1}^*)^rs_{\alpha_i}p_Hs_{\alpha_1}^rp_H=0.
\]

Indeed, suppose that
\(k_i p_H(s_{\alpha_1}^*)^rs_{\alpha_i}p_Hs_{\alpha_1}^rp_H\neq0\). Since
\(\alpha_1=\beta^{l_1}\), this implies
\[
p_H(s_\beta^*)^{rl_1}s_{\alpha_i}s_\beta^{rl_1}
p_{H\cap r(H,\alpha_i\beta^{rl_1})}
\neq0.
\]
Since \(|\alpha_i|\leq|\alpha_{m_1}|<r|\alpha_1|=rl_1|\beta|\), Proposition~\ref{properties}(2)
implies that \(\alpha_i\) is an initial segment of \(\beta^{rl_1}\). In
particular, the first \(|\beta|\) letters of \(\alpha_i\) are precisely
\(\beta\), and hence \(s_\beta^*s_{\alpha_i}\neq0\). Thus
\(\alpha_i=\beta^t\alpha_i^{(1)}\), for some \(t\geq1\), where
\(0\leq|\alpha_i^{(1)}|<|\beta|\).

If \(|\alpha_i^{(1)}|=0\), then \(\alpha_i=\beta^t\), and we are done. Assume
therefore that \(|\alpha_i^{(1)}|>0\). Then
\[
p_H(s_\beta^*)^{rl_1}s_{\alpha_i}s_\beta^{rl_1}
p_{H\cap r(H,\alpha_i\beta^{rl_1})}
=
p_H(s_\beta^*)^{rl_1-t}s_{\alpha_i^{(1)}}s_\beta^{rl_1}
p_{H\cap r(H,\alpha_i\beta^{rl_1})}
\neq0,
\]
and consequently \(s_\beta^*s_{\alpha_i^{(1)}}\neq0\). Hence
\(\beta=(\alpha_i^{(1)})^{t_1}\alpha_i^{(2)}\), for some \(t_1\geq1\), where
\(0\leq|\alpha_i^{(2)}|<|\alpha_i^{(1)}|\). If
\(|\alpha_i^{(2)}|=0\), then \(\beta=(\alpha_i^{(1)})^{t_1}\), and therefore
\[
\alpha_i=\beta^t\alpha_i^{(1)}
=
((\alpha_i^{(1)})^{t_1})^t\alpha_i^{(1)}
=
(\alpha_i^{(1)})^{t_1t+1},
\]
contradicting the minimality of \(|\beta|\). Thus \(|\alpha_i^{(2)}|>0\).
Moreover, the preceding nonzero product can be rewritten as
\[
p_H(s_\beta^*)^{rl_1-t-1}
s_{\alpha_i^{(2)}}^*
(s_{\alpha_i^{(1)}}^*)^{t_1}
s_{\alpha_i^{(1)}}s_{\alpha_i^{(1)}}^{t_1}
s_{\alpha_i^{(2)}}s_\beta^{rl_1-1}
p_{H\cap r(H,\alpha_i\beta^{rl_1})}
\neq0,
\]
and hence \(s_{\alpha_i^{(2)}}^*s_{\alpha_i^{(1)}}\neq0\). Thus
\(\alpha_i^{(1)}=(\alpha_i^{(2)})^{t_2}\alpha_i^{(3)}\), for some
\(t_2\geq1\), where \(0\leq|\alpha_i^{(3)}|<|\alpha_i^{(2)}|\).

Continuing inductively, after finitely many steps we obtain
\(\alpha_i^{(j-1)}=(\alpha_i^{(j)})^{t_j}\) for some \(j\). Tracing back
through the previous equalities yields \(\alpha_i=(\alpha_i^{(j)})^s\) for
some positive integer \(s\), again contradicting the minimality of \(|\beta|\).
Therefore, either \(\alpha_i=\beta^{l_i}\) for some \(l_i\geq1\), or
\(k_i p_H(s_{\alpha_1}^*)^rs_{\alpha_i}p_Hs_{\alpha_1}^rp_H=0\), as claimed.

It follows that, after discarding the zero terms,
\[
p_H(s_{\alpha_1}^*)^r p_H(a'\alpha b')p_Hs_{\alpha_1}^rp_H
=
kp_H+\sum_{i=1}^{m_2}k_i s_\beta^{l_i}p_H,
\]
where \(m_2\leq m_1\), \(l_i\in\mathbb N^*\), \(k_i\neq0\), and
\((\beta^{l_i},H)\) is a cycle for every \(i\). Let
\(l=\gcd(l_1,\ldots,l_{m_2})\). By Lemma~\ref{gcd cycle},
\((\beta^l,H)\) is a cycle. Since \(l\mid l_i\), say \(l_i=ll_i'\), the last
expression becomes
\[
kp_H+\sum_{i=1}^{m_2}k_i(s_\beta^l)^{l_i'}p_H.
\]
If \((\beta^l,H)\) has no exit, then the conclusion follows by taking
\(c=\beta^l\).

It remains to handle the case where \((\beta^l,H)\) has an exit. Write
\(\beta^l=a_1a_2\cdots a_{|\beta^l|}\), with
\(a_{|\beta^l|+1}=a_1\). By the definition of exit, there exist
\(0\leq q\leq|\beta^l|\) and a nonempty \(B\in\mathcal B\) such that
\(B\subseteq r(H,z)\), where \(z=a_1\cdots a_q\) if \(q\geq1\) and
\(z=\omega\) if \(q=0\), and \(\mathcal L(BE^1)\neq\{a_{q+1}\}\). If
\(\mathcal L(BE^1)=\emptyset\), then multiplying by \(p_Bs_z^*\) on the left
and by \(s_zp_B\) on the right gives \(kp_B\). Otherwise, choose
\(b\in\mathcal L(BE^1)\) with \(b\neq a_{q+1}\); multiplying by
\(s_b^*p_Bs_z^*\) on the left and by \(s_zp_Bs_b\) on the right gives
\(kp_{r(B,b)}\). In either case, we obtain a nonzero scalar multiple of a
projection, and the proof is complete.
\end{proof}

We now pass from real-edge polynomials to arbitrary elements. For this, we use
the following ghost-degree filtration.

A monomial
\[
s_{a_1}\cdots s_{a_m}p_As_{b_1}^*\cdots s_{b_n}^*
\]
in \(L_K(E,\mathcal L,\mathcal B)\), where \(a_i,b_j\in\mathcal A\), is said
to have ghost degree \(n\). If an element is written in the form
\(\sum_i k_i s_{\alpha_i}p_{A_i}s_{\beta_i}^*\), with \(k_i\neq0\), the ghost
degree of this expression is the maximum of the lengths \(|\beta_i|\). Since
an element of \(L_K(E,\mathcal L,\mathcal B)\) may have many such expressions,
we define the ghost degree of an element \(x\), denoted \(\operatorname{gdeg}(x)\),
to be the minimum ghost degree among all expressions of \(x\) in the spanning
form of Proposition~\ref{properties}(3).

We are now in a position to state the main theorem of this section, extending
\cite[Theorem 2.2.11]{AAS} and \cite[Theorem 3.2]{gon:ratrt19} to the
labelled graph setting.

\begin{thm}[The Reduction Theorem]\label{thm:reduction}
Let \((E,\mathcal L,\mathcal B)\) be a normal labelled space and \(K\) a field.
For every nonzero element \(\alpha\in L_K(E,\mathcal L,\mathcal B)\), there
exist \(a,b\in L_K(E,\mathcal L,\mathcal B)\) such that either:
\begin{enumerate}
\item \(0\neq a\alpha b=kp_A\) for some \(k\in K\setminus\{0\}\) and
\(A\in\mathcal B\setminus\{\emptyset\}\); or
\item \(0\neq a\alpha b=\sum_{i=0}^n k_i s_c^i p_A\), where
\(n\in\mathbb N\), \(k_i\in K\), \((c,A)\) is a cycle without exit, and
\(s_c^0:=p_A\).
\end{enumerate}
\end{thm}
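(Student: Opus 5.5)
The plan is to reduce Theorem~\ref{thm:reduction} to Lemma~\ref{d=0}. It suffices to show that every nonzero \(x\in L_K(E,\mathcal L,\mathcal B)\) can be multiplied on the right by a suitable element to give a nonzero polynomial in only real edges. Lemma~\ref{d=0} then supplies the remaining left and right multipliers, and composing the multipliers yields \(a\) and \(b\). We argue by induction on \(\operatorname{gdeg}(x)\). If \(\operatorname{gdeg}(x)=0\), then \(x\) is already in only real edges and there is nothing to do.

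Suppose \(\operatorname{gdeg}(x)=d>0\) and fix an expression \(x=\sum_i k_i s_{\alpha_i}p_{A_i}s_{\beta_i}^*\) of minimal ghost degree. The first step is to show that there is a letter \(e\in\mathcal A\) with \(xs_e\neq0\). Using Proposition~\ref{properties}(2), \(\operatorname{gdeg}(xs_e)\leq d-1\), because every term with \(\beta_i=e\beta_i'\) loses a ghost letter. Terms with \(\beta_i=\omega\) become \(s_{\alpha_i}p_{A_i}s_e=s_{\alpha_i e}p_{r(A_i,e)}\), which has ghost degree \(0\), and terms whose \(\beta_i\) begins with another letter vanish. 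If such an \(e\) exists, we apply the induction hypothesis to \(xs_e\).

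The main obstacle is the case where \(xs_e=0\) for every \(e\in\mathcal A\). The plan here is to show that \(x\) can then be rewritten with strictly smaller ghost degree, contradicting minimality. Group the terms by the first letter of \(\beta_i\), writing \(x=x_0+\sum_e y_e s_e^*\) with \(x_0\) the part where \(\beta_i=\omega\). Then \(xs_e=0\) gives \(y_ep_{r(e)}=-x_0s_e\), so \(y_es_e^*=y_ep_{r(e)}s_e^*=-x_0s_es_e^*\), and hence \(x=x_0\bigl(1-\sum_e s_es_e^*\bigr)\) formally.

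To turn this formal identity into a genuine reduction, treat each projection in \(x_0\) separately. Refine the sets \(A_i\) of the \(\beta_i=\omega\) terms into disjoint pieces via \cite[Lemma 3.5]{BCGW:lpaolg} and the Boolean structure of \(\mathcal B\).
\begin{enumerate}
\item If a piece \(C\) is regular, then \(p_C=\sum_{a\in\mathcal L(CE^1)}s_ap_{r(C,a)}s_a^*\), so \(p_C\bigl(1-\sum s_es_e^*\bigr)=0\) and that portion can be dropped.
\item If a piece \(C\) is not regular, it contains a nonempty \(B\in\mathcal B\) that either emits no edges or emits infinitely many labels.
\end{enumerate}
In the second case, first try multiplying \(x\) on the right by \(p_B\) or \(p_{B'}\) for a suitable \(B'\subseteq B\). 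Terms with nonempty \(\beta_i\) only have ghost parts on finitely many letters, so a suitable choice should kill all of them while leaving a nonzero ghost-degree-\(0\) element. If \(B\) emits no edges, right multiplication by \(p_B\) kills every \(s_e^*\) term, since \(s_e^*p_B=p_{r(B,e)}s_e^*=0\). If \(B\) emits infinitely many labels, pick a label \(f\) outside the finitely many first letters of the \(\beta_i\) and multiply on the right by \(s_f\). This makes the \(x_0\) part survive and kills the rest, contradicting \(xs_f=0\).

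Nonvanishing of the resulting real-edge element will rely on Proposition~\ref{properties}(1) together with the grading and the disjointness of the refined sets. I expect this bookkeeping, that is, showing that when no \(xs_e\) is nonzero the expression is not of minimal ghost degree or that the singular-set multiplication is nonzero, to be the delicate part of the proof.
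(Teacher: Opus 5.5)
Your route is the paper's: induction on ghost degree, using only right multiplications until Lemma~\ref{d=0} applies. Your identity $x=x_0-\sum_{e\in F}x_0s_es_e^*$, with $F$ the finite set of first letters of the $\beta_i$, plays the role of the paper's $\alpha p_A=\beta p_A\bigl(p_A-\sum_i s_{a_i}p_{r(A,a_i)}s_{a_i}^*\bigr)$. It is followed by the same split of a Boolean piece into sink, infinite-emitter and regular cases. Working with $x$ globally even spares you the paper's local-units step, since $x\neq0$ already forces $x_0\neq0$.

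Step (1), however, does not hold as you justify it. For a regular piece $C$, the Cuntz--Krieger relation gives $p_C-\sum_{e\in F}s_ep_{r(C,e)}s_e^*=\sum_{a\in\mathcal L(CE^1)\setminus F}s_ap_{r(C,a)}s_a^*$. This vanishes only when $\mathcal L(CE^1)\subseteq F$, and regularity says nothing about that. In your case the inclusion is in fact true for every piece with $z_Cp_C\neq0$ (writing $x_0=\sum_Cz_Cp_C$). But it follows only from the argument you reserve for infinite emitters: a label $f\in\mathcal L(CE^1)\setminus F$ would give $0=xs_fp_{r(C,f)}=xp_Cs_f=z_Cp_Cs_f\neq0$. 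This is exactly how the paper treats a regular $H_k$, where the nonzero factor together with the Cuntz--Krieger relation produces a label outside $\{a_1,\dots,a_r\}$.

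Consequently, the case $xs_e=0$ for all $e$ is closed not by contradicting minimality. It is closed in one of three ways: by $x=0$, by some $xs_f\neq0$, or by the real-edge element $xp_B$ for a sink subset $B$.

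The nonvanishing you defer is the content of the paper's last two paragraphs. Since each $A_i$ either contains or misses $C$, you can write $z_Cp_C=\sum_jh_js_{\gamma_j}p_C$ with $C\subseteq r(\gamma_j)$. Take a nonzero homogeneous component and combine equal words. Left multiplication by $s_{\gamma_{j_0}}^*$ then yields $h_{j_0}p_B\neq0$ in the sink case, and $h_{j_0}s_fp_{r(C,f)}\neq0$ in the extra-label case, by Proposition~\ref{properties}(1). The grading then shows the whole element is nonzero.
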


\begin{proof}
We argue by induction on \(d=\operatorname{gdeg}(\alpha)\). If \(d=0\), the
result follows from Lemma~\ref{d=0}. Suppose \(d\geq1\), and assume the result
holds for all nonzero elements of ghost degree strictly smaller than \(d\).
Using Proposition~\ref{properties}(3), write
\[
\alpha
=
\sum_{t=1}^q g_t p_{C_t}
+
\sum_{j=1}^m h_j s_{\gamma_j}p_{B_j}
+
\sum_{i=1}^n x_i s_{a_i}^*,
\]
where \(n\geq1\), \(x_i\neq0\), \(a_i\in\mathcal A\), the letters \(a_i\) are
pairwise distinct, and \(\operatorname{gdeg}(x_i)<d\) for every \(i\).

Suppose first that \(\alpha p_A=0\) for every \(A\in\mathcal B\). Then
\(\alpha s_a^*=\alpha p_{r(a)}s_a^*=0\) for every \(a\in\mathcal A\). If also
\(\alpha s_a=0\) for every \(a\in\mathcal A\), then, since
\(L_K(E,\mathcal L,\mathcal B)\) has local units by
\cite[Remark 6.8]{BCGW:lpaolg}, we would have \(\alpha=0\), a contradiction.
Therefore \(\alpha s_a\neq0\) for some \(a\in\mathcal A\). Multiplication by
\(s_a\) on the right lowers the ghost degree, and hence
\(\operatorname{gdeg}(\alpha s_a)<d\). The induction hypothesis applies to
\(\alpha s_a\).

Now assume that \(\alpha p_A\neq0\) for some \(A\in\mathcal B\). After removing
zero terms, write
\[
\alpha p_A
=
\beta p_A+\sum_{i=1}^r x_i s_{a_i}^*p_A\neq0,
\]
where \(\operatorname{gdeg}(\beta)=0\), \(1\leq r\leq n\), the letters \(a_i\)
are pairwise distinct, and \(\operatorname{gdeg}(x_i)<d\). If
\(\operatorname{gdeg}(\alpha p_A)<d\), the induction hypothesis applies. Thus
we may assume that \(\operatorname{gdeg}(\alpha p_A)=d\). In particular,
\(\mathcal L(AE^1)\neq\emptyset\).

If \(\alpha p_A s_{a_j}\neq0\) for some \(j\), then
\[
\alpha p_A s_{a_j}
=
\beta p_A s_{a_j}+x_jp_{r(A,a_j)}
\]
has ghost degree strictly smaller than \(d\), and induction applies. We may
therefore assume that \(\alpha p_A s_{a_j}=0\) for all \(j=1,\ldots,r\). Then
\(x_jp_{r(A,a_j)}=-\beta p_As_{a_j}\), and substitution gives
\[
\alpha p_A
=
\beta p_A
\left(
p_A-\sum_{i=1}^r s_{a_i}p_{r(A,a_i)}s_{a_i}^*
\right)
\neq0.
\]
In particular, \(\beta p_A\neq0\) and the second factor is nonzero.

Write
\[
\beta p_A=\sum_{t=1}^q g_t p_{C_t}+\sum_{j=1}^m h_j s_{\gamma_j}p_{B_j},
\]
where \(C_t\subseteq A\) and \(B_j\subseteq A\cap r(\gamma_j)\). Let
\(B=(\bigcup_t C_t)\cup(\bigcup_j B_j)\). Then
\(B\in\mathcal B\setminus\{\emptyset\}\), \(B\subseteq A\), and
\(\beta p_B=\beta p_A\). Hence \(\alpha p_B=\alpha p_A\neq0\).

Partition \(B\) into finitely many nonempty Boolean pieces
\(H_i\in\mathcal B\) such that each \(C_t\) and each \(B_j\) either contains
\(H_i\) or is disjoint from \(H_i\). Since \(\alpha p_B\neq0\), there is a
piece \(H_k\) such that \(\alpha p_{H_k}\neq0\). For this piece,
\[
\alpha p_{H_k}
=
\beta p_{H_k}
\left(
p_{H_k}
-
\sum_{i=1}^r s_{a_i}p_{r(H_k,a_i)}s_{a_i}^*
\right)
\neq0.
\]
Thus \(\beta p_{H_k}\neq0\) and the second factor is nonzero.

If \(\mathcal L(H_kE^1)=\emptyset\), then
\(\alpha p_{H_k}=\beta p_{H_k}\), which has ghost degree \(0\), and induction
applies. Assume now that \(\mathcal L(H_kE^1)\neq\emptyset\). If there exists
a nonempty \(X\in\mathcal B\) with \(X\subseteq H_k\) and
\(\mathcal L(XE^1)=\emptyset\), then \(s_a^*p_X=0\) for all
\(a\in\mathcal A\), and hence \(\alpha p_X=\beta p_X\). We claim that
\(\beta p_X\neq0\). Since \(H_k\) is a Boolean piece, every coefficient set
\(C_t\) or \(B_j\) either contains \(H_k\) or is disjoint from it. Therefore
every term that survives after multiplication by \(p_{H_k}\) also survives
after multiplication by \(p_X\). Writing
\(\beta p_{H_k}=gp_{H_k}+\sum_{j=1}^l h_js_{\gamma_j}p_{H_k}\), if
\(g\neq0\), then \(gp_X\) is a nonzero degree-zero component of \(\beta p_X\).
If the real-path part is nonzero, then by the \(\mathbb Z\)-grading, after
combining equal labelled words if necessary, there exists a nonempty subset
\(J\subseteq\{1,\ldots,l\}\) such that the words \(\gamma_j\), \(j\in J\), are
pairwise distinct, have the same length, and
\(\sum_{j\in J}h_js_{\gamma_j}p_{H_k}\neq0\). If
\(\sum_{j\in J}h_js_{\gamma_j}p_X=0\), then multiplying on the left by
\(s_{\gamma_{i_0}}^*\), for some \(i_0\in J\) with \(h_{i_0}\neq0\), gives
\(h_{i_0}p_X=0\), a contradiction. Hence \(\beta p_X\neq0\). Therefore
\(\alpha p_X=\beta p_X\neq0\), and induction applies.

Accordingly, we may assume that \(\mathcal L(XE^1)\neq\emptyset\) for every
nonempty \(X\in\mathcal B\) with \(X\subseteq H_k\). If \(H_k\) is not regular,
then some nonempty \(X\in\mathcal B\), \(X\subseteq H_k\), emits infinitely
many labels. Hence \(\mathcal L(H_kE^1)\) is infinite, and we may choose
\(a\in\mathcal L(H_kE^1)\setminus\{a_1,\ldots,a_r\}\). If \(H_k\) is regular,
then the nonzero element
\(p_{H_k}-\sum_i s_{a_i}p_{r(H_k,a_i)}s_{a_i}^*\), together with the
Cuntz--Krieger relation for \(H_k\), also gives a label
\(a\in\mathcal L(H_kE^1)\setminus\{a_1,\ldots,a_r\}\). In either case,
\[
\alpha p_{H_k}s_a=\beta p_{H_k}s_a.
\]

It remains to prove that \(\beta p_{H_k}s_a\neq0\). Write \(\beta p_{H_k}\) as
the sum of its homogeneous real-degree components:
\[
\beta p_{H_k}
=
g'_kp_{H_k}
+
\sum_{j=1}^s\sum_{i=1}^{n_j}
k_i^{(j)}s_{\gamma_i^{(j)}}p_{H_k},
\]
where, for each fixed \(j\), the words \(\gamma_i^{(j)}\) are pairwise distinct
and have the same positive length, and the lengths strictly increase with
\(j\). If \(g'_k\neq0\), then
\(s_a^*g'_kp_{H_k}s_a=g'_kp_{r(H_k,a)}\neq0\), so
\(g'_kp_{H_k}s_a\neq0\). Since this term has degree \(1\), while the remaining
terms have strictly larger positive degree, the \(\mathbb Z\)-grading gives
\(\beta p_{H_k}s_a\neq0\).

If \(g'_k=0\), choose a nonzero homogeneous component
\(\sum_i k_i^{(j)}s_{\gamma_i^{(j)}}p_{H_k}\). Since
\(r(H_k,a)\neq\emptyset\), we have
\[
s_a^*s_{\gamma_1^{(j)}}^*
\left(
\sum_{i=1}^{n_j}k_i^{(j)}s_{\gamma_i^{(j)}}p_{H_k}s_a
\right)
=
k_1^{(j)}p_{r(H_k,a)}
\neq0.
\]
Thus that homogeneous component remains nonzero after multiplication by
\(s_a\), and again the grading gives \(\beta p_{H_k}s_a\neq0\). Hence
\(\alpha p_{H_k}s_a\neq0\). Its ghost degree is strictly smaller than \(d\), so
the induction hypothesis applies. This completes the proof.
\end{proof}

\section{Applications}

In this section, we apply Theorem~\ref{thm:reduction} to obtain uniqueness
theorems and two ring-theoretic consequences for Leavitt labelled path
algebras. More precisely, we prove the graded uniqueness theorem
(Theorem~\ref{thm:graded-uniq}) and the Cuntz--Krieger uniqueness theorem
(Theorem~\ref{thm:Cuntz-Krieger}), and we show that every Leavitt labelled
path algebra over a field is semiprime and semiprimitive.

We begin with a useful consequence of the reduction theorem for ideals.

\begin{lem}\label{ideals-vertex}
Let \((E,\mathcal L,\mathcal B)\) be a normal labelled space and \(K\) a field.
Then the following statements hold:
\begin{enumerate}
\item
For every nonzero graded ideal \(I\) of \(L_K(E,\mathcal L,\mathcal B)\), there
exists a nonempty set \(A\in\mathcal B\) such that \(p_A\in I\).

\item
If every cycle in \((E,\mathcal L,\mathcal B)\) has an exit, then for every
nonzero ideal \(I\) of \(L_K(E,\mathcal L,\mathcal B)\), there exists a
nonempty set \(A\in\mathcal B\) such that \(p_A\in I\).
\end{enumerate}
\end{lem}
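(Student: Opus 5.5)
Both parts will come from applying Theorem~\ref{thm:reduction} to a nonzero element of \(I\) and then analysing the second alternative of that theorem. Take \(0\neq x\in I\). The theorem gives \(a,b\) with \(a x b\in I\). The product \(axb\) is either \(kp_A\) with \(k\neq0\) and \(A\neq\emptyset\), or \(\sum_{i=0}^n k_i s_c^i p_A\neq0\) with \((c,A)\) a cycle without exit. In the first case \(p_A=k^{-1}axb\in I\), and we are done.

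\emph{Part (2).} Every cycle has an exit by hypothesis, so the second alternative cannot occur. Only the first case remains, and the conclusion follows.

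\emph{Part (1).} Let \(I\) be graded. In the second case we use the grading of Proposition~\ref{properties}(3): \(s_c^ip_A\) is homogeneous of degree \(i|c|\), and these degrees are pairwise distinct. Hence each homogeneous component of \(y:=\sum_i k_is_c^ip_A\) is \(k_is_c^ip_A\), and each lies in \(I\) because \(I\) is graded. Since \(y\neq0\), some \(k_{i_0}s_c^{i_0}p_A\) is nonzero, which forces \(k_{i_0}\neq0\) and \(s_c^{i_0}p_A\in I\). If \(i_0=0\), then \(p_A\in I\) directly. Otherwise we multiply on the left by \((s_c^*)^{i_0}\), or rather by \(s_{c^{i_0}}^*\). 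Proposition~\ref{properties}(2) gives \(s_{c^{i_0}}^*s_{c^{i_0}}p_A=p_{r(c^{i_0})\cap A}\). By Lemma~\ref{cycle}(1), \(A\subseteq r(A,c^{i_0})\subseteq r(c^{i_0})\), so this product equals \(p_A\). Thus \(p_A\in I\), with \(A\) nonempty as provided by Theorem~\ref{thm:reduction}.

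There is no real obstacle here. The only point needing care is the identification of homogeneous components in part (1), namely that the degrees \(i|c|\) are distinct because \(|c|\ge1\). The other point is the cancellation \(s_{c^{i}}^*s_{c^{i}}p_A=p_A\), which relies on the cycle property through Lemma~\ref{cycle}.
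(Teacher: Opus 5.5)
Your proposal is correct and follows the paper's proof essentially step for step. You apply Theorem~\ref{thm:reduction} and rule out the second alternative in part (2) by hypothesis. In part (1) you use gradedness of \(I\) to isolate a nonzero component \(k_is_c^ip_A\in I\), then recover \(p_A\) by multiplying with \((s_c^i)^*\), using \(A\subseteq r(A,c^i)\) from Lemma~\ref{cycle}.
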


\begin{proof}
Let \(I\) be a nonzero graded ideal and let \(0\neq\alpha\in I\). By
Theorem~\ref{thm:reduction}, there exist \(a,b\in L_K(E,\mathcal L,\mathcal B)\)
such that either \(a\alpha b=kp_A\), with \(0\neq k\in K\) and
\(\emptyset\neq A\in\mathcal B\), or
\(0\neq a\alpha b=\sum_{i=0}^n k_is_c^ip_A\), where \((c,A)\) is a cycle
without exit and \(s_c^0:=p_A\).

In the first case, \(kp_A\in I\), and hence \(p_A\in I\). In the second case,
since \(I\) is graded, each homogeneous component \(k_is_c^ip_A\) belongs to
\(I\). If \(k_0p_A\neq0\), then again \(p_A\in I\). Otherwise, for some
\(1\leq i\leq n\), we have \(k_is_c^ip_A\neq0\). By Lemma~\ref{cycle},
\((c^i,A)\) is a cycle, so \(A\subseteq r(A,c^i)\). Therefore
\[
p_A
=
k_i^{-1}(s_c^i)^*(k_is_c^ip_A)
\in I.
\]
This proves (1).

For (2), let \(I\) be a nonzero ideal and choose \(0\neq\alpha\in I\). Since
every cycle has an exit, the second alternative in Theorem~\ref{thm:reduction}
cannot occur. Hence \(a\alpha b=kp_A\) for some \(0\neq k\in K\) and
\(\emptyset\neq A\in\mathcal B\). Since \(a\alpha b\in I\), we get
\(p_A\in I\).
\end{proof}

The graded uniqueness theorem for Leavitt labelled path algebras was
established in \cite{BCGW:lpaolg} by Boava, de Castro, Gon\c{c}alves, and
van Wyk using the fact that every Leavitt labelled path algebra is a
Cuntz--Pimsner ring. We now recover this theorem as a consequence of the
reduction theorem.

\begin{thm}[{\cite[Corollary 5.5]{BCGW:lpaolg}}]\label{thm:graded-uniq}
Let \((E,\mathcal L,\mathcal B)\) be a normal labelled space, let \(K\) be a
field, let \(A\) be a \(\mathbb Z\)-graded \(K\)-algebra, and let
\(\phi:L_K(E,\mathcal L,\mathcal B)\to A\) be a \(\mathbb Z\)-graded
homomorphism. Then \(\phi\) is injective if and only if
\(\phi(p_B)\neq0\) for every nonempty \(B\in\mathcal B\).
\end{thm}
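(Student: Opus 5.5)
The forward implication is immediate. If \(\phi\) is injective and \(B\in\mathcal B\) is nonempty, then \(p_B\neq0\) by Proposition~\ref{properties}(1), so \(\phi(p_B)\neq0\).

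For the converse, I will show that \(I:=\ker\phi\) is zero, and the plan is to reduce this to Lemma~\ref{ideals-vertex}(1). The first step is to check that \(I\) is a graded ideal. This uses only the fact that \(\phi\) is a graded homomorphism, meaning \(\phi(L_K(E,\mathcal L,\mathcal B)_n)\subseteq A_n\) for every \(n\in\mathbb Z\). Take \(x\in I\) and write it as \(x=\sum_n x_n\), where \(x_n\in L_K(E,\mathcal L,\mathcal B)_n\) are its homogeneous components (only finitely many are nonzero). Then
\[
0=\phi(x)=\sum_n\phi(x_n),
\]
and \(\phi(x_n)\in A_n\). Since \(A=\bigoplus_n A_n\) is a direct sum, each \(\phi(x_n)\) must be zero. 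Hence every \(x_n\) lies in \(I\), so \(I=\bigoplus_n (I\cap L_K(E,\mathcal L,\mathcal B)_n)\) and \(I\) is graded.

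The second step argues by contradiction. Suppose \(I\neq0\). By Lemma~\ref{ideals-vertex}(1), there is a nonempty \(B\in\mathcal B\) with \(p_B\in I\), that is, \(\phi(p_B)=0\). This contradicts the hypothesis, so \(I=0\) and \(\phi\) is injective.

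All the real difficulty sits inside Lemma~\ref{ideals-vertex}(1), and therefore inside Theorem~\ref{thm:reduction}. There, the homogeneous components \(k_is_c^ip_A\) coming from a cycle without exit are turned back into \(p_A\) using \((s_c^i)^*\) and Lemma~\ref{cycle}. With those results taken as given, the only point that needs care here is the gradedness of the kernel, and that follows from the direct-sum decomposition of \(A\).
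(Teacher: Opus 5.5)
Your proposal is correct and follows the paper's proof: the forward direction comes from Proposition~\ref{properties}(1), and the converse comes from applying Lemma~\ref{ideals-vertex}(1) to the nonzero graded ideal \(\ker\phi\). You also verify explicitly, using the direct-sum decomposition of \(A\), that \(\ker\phi\) is graded, which the paper only asserts.
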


\begin{proof}
If \(\phi\) is injective, then \(\phi(p_B)\neq0\) for every nonempty
\(B\in\mathcal B\), because \(p_B\neq0\) by Proposition~\ref{properties}(1).

Conversely, suppose that \(\phi(p_B)\neq0\) for every nonempty \(B\in\mathcal B\).
If \(\ker(\phi)\neq0\), then \(\ker(\phi)\) is a nonzero graded ideal. By
Lemma~\ref{ideals-vertex}(1), there exists a nonempty \(B\in\mathcal B\) such
that \(p_B\in\ker(\phi)\). Hence \(\phi(p_B)=0\), a contradiction. Therefore
\(\ker(\phi)=0\), and \(\phi\) is injective.
\end{proof}

As another consequence of Theorem~\ref{thm:reduction}, we obtain the following
Cuntz--Krieger uniqueness theorem for Leavitt labelled path algebras, extending
\cite[Theorem 2.10]{dgn:otioulpa} to the labelled graph setting.

\begin{thm}\label{thm:Cuntz-Krieger}
Let \((E,\mathcal L,\mathcal B)\) be a normal labelled space such that every
cycle has an exit, let \(K\) be a field, let \(A\) be a \(K\)-algebra, and let
\(\phi:L_K(E,\mathcal L,\mathcal B)\to A\) be a \(K\)-algebra homomorphism.
Then \(\phi\) is injective if and only if \(\phi(p_B)\neq0\) for every nonempty
\(B\in\mathcal B\).
\end{thm}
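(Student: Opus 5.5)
The plan is to follow the proof of Theorem~\ref{thm:graded-uniq} verbatim, replacing Lemma~\ref{ideals-vertex}(1) with Lemma~\ref{ideals-vertex}(2). The key point is that the hypothesis on cycles frees us from needing the kernel to be graded.

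For the forward direction, suppose \(\phi\) is injective. By Proposition~\ref{properties}(1), \(p_B\neq0\) for every nonempty \(B\in\mathcal B\), and hence \(\phi(p_B)\neq0\).

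For the converse, assume \(\phi(p_B)\neq0\) for every nonempty \(B\in\mathcal B\), and suppose for contradiction that \(I=\ker(\phi)\neq0\).

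1. \(I\) is a two-sided ideal of \(L_K(E,\mathcal L,\mathcal B)\), since it is the kernel of a \(K\)-algebra homomorphism. It need not be graded, because \(A\) carries no grading and \(\phi\) is arbitrary.
2. Since every cycle in \((E,\mathcal L,\mathcal B)\) has an exit, Lemma~\ref{ideals-vertex}(2) applies to the nonzero ideal \(I\). It produces a nonempty \(B\in\mathcal B\) with \(p_B\in I\).
3. Then \(\phi(p_B)=0\), contradicting the hypothesis. Hence \(\ker(\phi)=0\), and \(\phi\) is injective.

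There is no genuine obstacle here, since all the work lies in Theorem~\ref{thm:reduction} via Lemma~\ref{ideals-vertex}(2). The only point to check is that the second alternative of the reduction theorem, a polynomial supported on a cycle \((c,A)\) without exit, is really excluded.

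It is excluded because that alternative requires \((c,A)\) to be a cycle in the sense of Definition~\ref{def:cycle} having no exit, while the standing hypothesis says every such cycle has an exit. The first alternative then gives \(kp_A\in I\) with \(k\neq0\), and therefore \(p_A=k^{-1}(kp_A)\in I\).
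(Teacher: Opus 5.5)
Your proposal is correct and is the paper's proof. The forward direction uses Proposition~\ref{properties}(1), and the converse applies Lemma~\ref{ideals-vertex}(2) to the nonzero, not necessarily graded, ideal \(\ker(\phi)\) to obtain \(p_B\in\ker(\phi)\) with \(B\neq\emptyset\), a contradiction; your remark on why the cycle-without-exit alternative of Theorem~\ref{thm:reduction} cannot occur simply restates the proof of that lemma.
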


\begin{proof}
The forward implication follows from Proposition~\ref{properties}(1). For the
converse, suppose that \(\phi(p_B)\neq0\) for every nonempty \(B\in\mathcal B\)
and that \(\ker(\phi)\neq0\). By Lemma~\ref{ideals-vertex}(2), there exists a
nonempty \(B\in\mathcal B\) such that \(p_B\in\ker(\phi)\). Hence
\(\phi(p_B)=0\), a contradiction. Therefore \(\ker(\phi)=0\), and \(\phi\) is
injective.
\end{proof}

We next use the reduction theorem to prove semiprimeness. Recall that a ring
\(R\) is {\it semiprime} if \(I^2=0\) implies \(I=0\), for every ideal \(I\) of \(R\).

\begin{thm}\label{semiprime}
Let \((E,\mathcal L,\mathcal B)\) be a normal labelled space and \(K\) a field.
Then \(L_K(E,\mathcal L,\mathcal B)\) is semiprime.
\end{thm}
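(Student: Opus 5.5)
To prove that \(L_K(E,\mathcal L,\mathcal B)\) is semiprime, I will take a nonzero ideal \(I\) with \(I^2=0\) and derive a contradiction. Pick \(0\neq x\in I\) and apply Theorem~\ref{thm:reduction}: there are \(a,b\) with \(0\neq axb\in I\), and \(axb\) is either \(kp_A\) with \(k\neq0\) and \(A\neq\emptyset\), or \(\sum_{i=0}^n k_is_c^ip_A\) with \((c,A)\) a cycle without exit.

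\emph{Projection case.} Here \(p_A\in I\), so \(p_A=p_Ap_A\in I^2=0\). This contradicts Proposition~\ref{properties}(1).

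\emph{Cycle case.} Set \(y=\sum_{i=0}^n k_is_c^ip_A\neq0\). Lemma~\ref{cycle}(2) gives \(y=p_Ayp_A\), so \(y\) lies in the corner \(p_AL_K(E,\mathcal L,\mathcal B)p_A\). I will show that the subalgebra generated there by \(s_cp_A\) and \(s_c^*p_A\) behaves like the Laurent polynomial ring \(K[x,x^{-1}]\), which is a domain, and then conclude from \(y^2\in I^2=0\).

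1. First show \(s_c^*p_As_cp_A=p_A\). Using Proposition~\ref{properties}(2) this equals \(p_{r(A,c)}p_A\), and \(A\subseteq r(A,c)\) by the cycle property.

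2. Next show \(s_cp_As_c^*=p_A\). This is the main obstacle and uses that \((c,A)\) has no exit. The no-exit condition says that every nonempty \(B\subseteq r(A,c_{1,k})\) emits only the label \(c_{k+1}\). I would argue by induction along the letters of \(c\). Each set \(X_k=r(A,c_{1,k})\) has all nonempty subsets emitting exactly one label, so \(X_k\) is regular. The relation in Definition~\ref{Def}(5) then gives \(p_{X_k}=s_{c_{k+1}}p_{X_{k+1}}s_{c_{k+1}}^*\). Chaining these and cutting by \(p_A\) should yield \(p_A=s_cp_{r(A,c)}s_c^*\,p_A\). Combined with \(A\subseteq r(A,c)\) and the cycle property, I expect to get \(p_A=s_cp_As_c^*\) after checking that \(r(A,c)\cap\) (the complement of \(A\)) contributes nothing when multiplied by \(p_A\). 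If that last check is delicate, I would fall back on multiplying \(y\) by \(s_c^*\)-powers and \(p_A\) on both sides to compare coefficients directly.

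3. With \(u=s_cp_A\) and \(u^*=s_c^*p_A\) invertible in the corner, consider the map \(K[x,x^{-1}]\to p_AL_Kp_A\) sending \(x\mapsto u\). It is \(\mathbb Z\)-graded, and each \(u^i\neq0\) because \((u^*)^iu^i=p_A\neq0\). Hence the map is injective on each homogeneous component, and therefore injective overall.

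4. Since \(y\) is the image of a nonzero polynomial \(f\), we have \(y^2\) equal to the image of \(f^2\neq0\), so \(y^2\neq0\). But \(y^2\in I^2=0\), a contradiction.

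Both cases are contradictory, so \(I=0\) and \(L_K(E,\mathcal L,\mathcal B)\) is semiprime. In fact step 1 alone already suffices: it gives \(p_A=(u^*)^iu^i\) for each \(i\). Let \(i\) be the top nonzero index of \(y\). Then \((u^*)^{i}y\) has a nonzero degree-zero component \(k_ip_A\), and \(((u^*)^iy)^2\) has top-degree part \(k_i^2p_A\neq0\) or similar. Using the grading, this avoids the no-exit computation of step 2, so I would try that simpler route first.
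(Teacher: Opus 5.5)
Your closing argument, the one you say you would try first, is correct and is essentially the paper's proof. Both rest only on $A\subseteq r(A,c^i)$ from Lemma~\ref{cycle}, which gives $u^i=s_c^ip_A$ and $(s_c^i)^*s_c^ip_A=p_A\neq0$, combined with the $\mathbb Z$-grading. The paper skips your shift by $(u^*)^n$ and squares $y$ directly: $y^2=\sum_{i,j}k_ik_js_c^{i+j}p_A$, whose top homogeneous component $k_n^2s_c^{2n}p_A$ is nonzero.

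Your Steps 3--4 also survive if you replace $K[x,x^{-1}]$ by $K[x]$. The map $x\mapsto u$ is a graded homomorphism into $p_AL_K(E,\mathcal L,\mathcal B)p_A$ with every $u^i\neq0$, hence injective, and $K[x]$ is a domain. The no-exit hypothesis is never needed.

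Step 2, however, is false for the cycles of Definition~\ref{def:cycle}, even without exits. The Laurent-polynomial route should therefore be dropped, not patched. A counterexample:
\begin{itemize}
\item Take $E^0=\{v,w,u\}$ with $a$-labelled edges $v\to v$, $v\to w$, $w\to u$, and $\mathcal B=\mathcal P(E^0)$. This space is normal, because distinct vertices have disjoint $a$-ranges, so $r(\cdot,a)$ and its iterates preserve intersections.
\item $(a,\{v\})$ is a cycle. It has no exit, since $\{v\}$, $\{w\}$ and $\{v,w\}$ all emit exactly $\{a\}$.
\item $\{v\}$ is regular, so $p_{\{v\}}=s_ap_{\{v,w\}}s_a^*=s_ap_{\{v\}}s_a^*+s_ap_{\{w\}}s_a^*$.
\item Here $s_ap_{\{w\}}s_a^*\neq0$, because $s_a^*(s_ap_{\{w\}}s_a^*)s_a=p_{\{w\}}\neq0$. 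Hence $s_ap_{\{v\}}s_a^*\neq p_{\{v\}}$.
\end{itemize}

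This is exactly the ``delicate check'' you postponed. Your chain of Cuntz--Krieger relations correctly yields $p_A=s_cp_{r(A,c)}s_c^*$. But the extra piece $s_cp_{r(A,c)\setminus A}s_c^*$ is nonzero whenever $r(A,c)\supsetneq A$, which the weak cycle condition allows (compare Example~\ref{exa:weak-cycle}).

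Consequently $u=s_cp_A$ is only an isometry in the corner and has no inverse there. Note also that $s_c^*p_A=p_{r(A,c)}s_c^*$ is not the adjoint $p_As_c^*$ of $u$, and it need not lie in the corner. Step 2 would hold for the stronger cycle notion of \cite[Definition 8.1]{BCGW:lpaolg}, where $r(A,c)=A$, but not here.
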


\begin{proof}
Let \(I\) be a nonzero ideal of \(L_K(E,\mathcal L,\mathcal B)\), and choose
\(0\neq\alpha\in I\). By Theorem~\ref{thm:reduction}, either \(I\) contains a
nonzero scalar multiple of a projection \(p_A\), with
\(\emptyset\neq A\in\mathcal B\), or \(I\) contains a nonzero element
\[
x=\sum_{i=0}^n k_is_c^ip_A,
\]
where \((c,A)\) is a cycle without exit and \(s_c^0:=p_A\).

In the first case, \(p_A\in I\), and therefore
\(0\neq p_A=p_A^2\in I^2\). In the second case, choose \(n\) maximal with
\(k_n\neq0\). If \(n=0\), then \(x=k_0p_A\), so \(p_A\in I\), and again
\(I^2\neq0\). Assume now that \(n\geq1\). By Lemma~\ref{cycle}, the highest
homogeneous component of \(x^2\) is \(k_n^2s_c^{2n}p_A\). This term is
nonzero: if \(s_c^{2n}p_A=0\), then
\[
0=(s_c^{2n})^*s_c^{2n}p_A=p_{r(c^{2n})}p_A=p_A,
\]
because \(A\subseteq r(A,c^{2n})\subseteq r(c^{2n})\), a contradiction.
Hence \(x^2\neq0\), and so \(I^2\neq0\). Therefore
\(L_K(E,\mathcal L,\mathcal B)\) is semiprime.
\end{proof}

We finish by proving semiprimitivity. Recall that a ring \(R\) is
{\it semiprimitive} if its Jacobson radical \(J(R)\) is zero. An element \(x\in R\)
is {\it right quasi-regular} if there exists \(y\in R\) such that \(x+y-xy=0\), and
left quasi-regular if there exists \(z\in R\) such that \(x+z-zx=0\). An ideal
is {\it quasi-regular} if every element in it is both left and right quasi-regular.
The Jacobson radical is the largest quasi-regular ideal; see
\cite[Chapter I, Section 6, Theorem 1]{Jacobson}. The next theorem extends
\cite[Proposition 2.3.2]{AAS} and \cite[Theorem 2.12]{dgn:otioulpa} to the
labelled graph setting.

\begin{thm}\label{semiprimitive}
Let \((E,\mathcal L,\mathcal B)\) be a normal labelled space and \(K\) a field.
Then \(L_K(E,\mathcal L,\mathcal B)\) is semiprimitive.
\end{thm}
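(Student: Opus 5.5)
We argue by contradiction. Suppose \(J=J(L_K(E,\mathcal L,\mathcal B))\neq0\) and choose \(0\neq x\in J\). Since \(J\) is a two-sided ideal, Theorem~\ref{thm:reduction} shows that \(J\) contains an element \(y=a x b\) of one of the following two forms:
\begin{enumerate}
\item \(y=kp_A\) with \(k\neq0\) and \(\emptyset\neq A\in\mathcal B\);
\item \(y=\sum_{i=0}^n k_is_c^ip_A\neq0\), where \((c,A)\) is a cycle without exit.
\end{enumerate}
The plan is to show that neither alternative can occur in a quasi-regular ideal.

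In the first case \(p_A\in J\), so \(p_A\) is right quasi-regular: there is \(z\) with \(p_A+z-p_Az=0\). Multiplying this on the left by \(p_A\) gives \(p_A+p_Az-p_Az=0\), that is, \(p_A=0\). This contradicts Proposition~\ref{properties}(1).

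In the second case we first make the coefficient structure explicit. By Lemma~\ref{cycle}, \(p_As_c^ip_A=s_c^ip_A\). Since \((c,A)\) has no exit and \(A\subseteq r(A,c)\), the corner \(p_AL_K(E,\mathcal L,\mathcal B)p_A\) should be spanned by the elements \(s_c^ip_A\) and \((s_c^*)^ip_A\). I would verify this using Proposition~\ref{properties}(2) together with the no-exit condition: any path leaving \(A\) must follow the letters of \(c\) cyclically, so every monomial \(p_As_\mu p_Bs_\nu^*p_A\) collapses to a power of \(s_c\) or of \(s_c^*\) times \(p_A\). Moreover, \(s_c^*s_cp_A=p_A\), and \(s_cs_c^*p_A=p_A\) follows from the relations and the fact that \(A\subseteq r(A,c)\) emits only the letter \(c_1\). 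Hence \(s_cp_A\) is invertible in the corner, and the corner is a quotient of \(K[t,t^{-1}]\). It is nonzero and graded with \(s_c^ip_A\neq0\) in degree \(i|c|\) (by the argument in Theorem~\ref{semiprime}), so it is in fact isomorphic to \(K[t,t^{-1}]\).

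Now \(J(p_ARp_A)=p_AJ(R)p_A\), and \(J(K[t,t^{-1}])=0\). The element \(p_Ayp_A=y\) is nonzero and lies in \(p_AJp_A\), which is a contradiction. Therefore \(J=0\).

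The main obstacle is the corner identification, in particular showing that \(s_cs_c^*p_A=p_A\). This needs \(A\) to be regular with \(\mathcal L(AE^1)=\{c_1\}\), and uses the Cuntz--Krieger relation iterated along \(c\). If this turns out to be delicate, I would avoid the full isomorphism. Instead I would work directly with \(y\): first use \(s_c^*\)-multiplications to arrange \(k_0\neq0\), then show that the candidate quasi-inverse of \(y\), which would have to be a power series in \(s_cp_A\), cannot lie in the algebra, by comparing the top-degree components in the grading of Proposition~\ref{properties}(3).
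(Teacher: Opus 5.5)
There is a genuine gap. Your first case and the reduction \(J(p_ALp_A)=p_AJp_A\) (with \(L=L_K(E,\mathcal L,\mathcal B)\)) are fine. The second case, however, rests on a false claim. For a cycle without exit in the sense of Definition~\ref{def:cycle}, the corner \(p_ALp_A\) need not be \(K[t,t^{-1}]\), and need not even be commutative.

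The identity \(s_cs_c^*p_A=p_A\) is true, but it does not make \(s_cp_A\) invertible in the corner. The element \(s_c^*p_A=p_{r(A,c)}s_c^*\) generally lies outside the corner, and the only candidate inverse inside it is \(p_As_c^*p_A=p_As_c^*\). But \((s_cp_A)(p_As_c^*)=s_cp_As_c^*=p_A-s_cp_{r(A,c)\setminus A}s_c^*\), which differs from \(p_A\) as soon as \(r(A,c)\neq A\).

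The weak cycle notion allows this with no exit. Take \(E^0=\{v,w,u\}\), edges \(v\to v\), \(v\to w\), \(w\to u\), all labelled \(a\), and \(\mathcal B=\mathcal P(E^0)\).
\begin{enumerate}
\item Every vertex receives at most one edge, so the space is normal.
\item \((a,\{v\})\) is a cycle, since \(\{v\}\subseteq r(\{v\},a)=\{v,w\}\).
\item It has no exit, because every nonempty subset of \(\{v\}\) and of \(\{v,w\}\) emits exactly \(\{a\}\).
\end{enumerate}
Yet \(p_v-s_ap_vs_a^*=s_ap_ws_a^*=p_v(s_ap_ws_a^*)p_v\) is nonzero, since \(s_a^*(s_ap_ws_a^*)s_a=p_w\). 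So \(t=s_ap_v\) satisfies \(t^*t=p_v\neq tt^*\) in the corner. Your claim that every \(p_As_\mu p_Bs_\nu^*p_A\) collapses to a power of \(s_c\) or \(s_c^*\) fails here.

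Separately, the corner contains \(p_B\) for every \(B\in\mathcal B\) with \(B\subseteq A\). So it is not generated by \(s_cp_A\) even when \(r(A,c)=A\). Proving that this corner is semiprimitive is essentially the original problem.

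Your fallback also does not work as stated. Normalizing to \(k_0\neq0\) goes in the wrong direction, and quasi-regularity of one element is not contradictory. For a single vertex \(v\) with an \(a\)-loop and \(A=\{v\}\), the element \(y=p_A+s_cp_A\) has \(k_0=1\). It is quasi-regular, with quasi-inverse \(p_A+p_As_c^*\), which involves \(s_c^*\) and is not a power series in \(s_cp_A\).

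The missing idea, which is how the paper argues, is to use the ideal property to move into strictly positive degree, and then use only the grading and the isometry identity.
\begin{enumerate}
\item Replace \(y\) by \(s_cy\in J\), so that \(y=\sum_{i=h}^nk_is_c^ip_A\) with \(h\geq1\) and \(k_n\neq0\).
\item Take \(z\in p_ALp_A\) with \(y+z-yz=0\), and split \(z\) into homogeneous components.
\item Since \(A\subseteq r(A,c^i)\subseteq r(c^i)\), we have \((s_c^i)^*s_c^ip_A=p_A\). Hence \(s_c^np_Az_m\neq0\) for the top component \(z_m\).
\item If \(m>0\), the degree \(n|c|+m\) component cannot cancel. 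If \(m<0\), the degree \(n|c|\) component is \(k_ns_c^np_A\neq0\). So \(m=0\), and comparing degree \(n|c|\) gives \(z_0=p_A\).
\item The choice \(z=p_A\) would give \(y+z-yz=p_A\neq0\). So \(z\) has a lowest component \(z_k\) with \(k<0\).
\item This \(z_k\) cannot be cancelled, because every component of \(y\) and of \(yz\) has degree \(>k\).
\end{enumerate}
No description of the corner is needed.
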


\begin{proof}
Let \(J\) be the Jacobson radical of \(L_K(E,\mathcal L,\mathcal B)\). Suppose
that \(J\neq0\), and choose \(0\neq\alpha\in J\). By
Theorem~\ref{thm:reduction}, there exist \(a,b\in L_K(E,\mathcal L,\mathcal B)\)
such that either \(a\alpha b=kp_A\), with \(0\neq k\in K\) and
\(\emptyset\neq A\in\mathcal B\), or
\(0\neq a\alpha b=\sum_{i=0}^n k_is_c^ip_A\), where \((c,A)\) is a cycle
without exit and \(s_c^0:=p_A\).

The first case is impossible, because it would imply \(p_A\in J\), and the
Jacobson radical contains no nonzero idempotents. In the second case, if only
the degree-zero term occurs, the same contradiction follows. Hence, replacing
\(a\alpha b\) by the obtained nonzero element of \(J\), we may assume that
\[
x=\sum_{i=0}^n k_is_c^ip_A\in J\setminus\{0\}
\]
with \(n\geq1\) and \(k_n\neq0\).

Since \(J\) is an ideal, \(s_c^m x\in J\) for all \(m\geq1\). Choosing \(m\)
large enough and reindexing the nonzero terms, we may replace \(x\) by a
nonzero element of \(J\) of the form
\[
x=\sum_{i=h}^n k_is_c^ip_A,
\]
where \(1\leq h\leq n\) and \(k_h,k_n\neq0\). By Lemma~\ref{cycle},
\(x=p_Axp_A\), so \(x\in J\cap p_AL_K(E,\mathcal L,\mathcal B)p_A\).

By \cite[Chapter III, Section 7, Proposition 1]{Jacobson},
\(J\cap p_AL_K(E,\mathcal L,\mathcal B)p_A\) is the Jacobson radical of the
corner \(p_AL_K(E,\mathcal L,\mathcal B)p_A\). Therefore, by
\cite[Chapter I, Section 6, Theorem 1]{Jacobson}, the element \(x\) is right
quasi-regular in this corner. Thus there exists a nonzero
\(y\in p_AL_K(E,\mathcal L,\mathcal B)p_A\) such that \(x+y-xy=0\).

Decompose \(y\) into its homogeneous components:
\(y=\sum_{i=k}^m y_i\), where \(k\leq m\), \(y_i\in
(p_AL_K(E,\mathcal L,\mathcal B)p_A)_i\), and \(y_m\neq0\). We first note that
\(s_c^np_Ay_m\neq0\). Indeed, if this product were zero, then
\[
0=(s_c^n)^*s_c^np_Ay_m=p_{r(c^n)}p_Ay_m=p_Ay_m=y_m,
\]
because \(A\subseteq r(A,c^n)\subseteq r(c^n)\), a contradiction.

Hence the homogeneous component of \(xy\) of degree \(n|c|+m\) is nonzero.
Indeed, it is \(k_ns_c^np_Ay_m\), and no term coming from
\(s_c^ip_Ay_j\), with \(i<n\) or \(j<m\), has degree \(n|c|+m\). If \(m>0\),
then \(n|c|+m>\max\{n|c|,m\}\), so this component cannot cancel with a
component of \(x+y\), contradicting \(x+y-xy=0\).

If \(m<0\), then every homogeneous component of \(xy\) has degree at most
\(n|c|+m<n|c|\), while every homogeneous component of \(y\) has degree at most
\(m<0\). Hence the degree \(n|c|\) homogeneous component of \(x+y-xy\) is
precisely \(k_ns_c^np_A\neq0\), again a contradiction. Thus \(m=0\).

Comparing the homogeneous component of degree \(n|c|\) in \(x+y-xy=0\), we
obtain \(k_ns_c^np_A=k_ns_c^np_Ay_0\). Since \(k_n\neq0\),
\(s_c^np_A=s_c^np_Ay_0\), and multiplying on the left by \((s_c^n)^*\) gives
\(p_A=p_Ay_0=y_0\).

If \(y=y_0=p_A\), then \(x+y-xy=x+p_A-xp_A=p_A\neq0\), a contradiction.
Therefore \(y\) has a nonzero homogeneous component of negative degree; let
\(k<0\) be the least degree with \(y_k\neq0\). We claim that
\(k_hs_c^hp_Ay_k\neq0\). If not, then
\[
0=k_h^{-1}(s_c^h)^*(k_hs_c^hp_Ay_k)=p_{r(c^h)}p_Ay_k=p_Ay_k=y_k,
\]
again a contradiction.

Thus the lowest homogeneous component of \(xy\) has degree \(h|c|+k\), which
is strictly greater than \(k\). On the other hand, the lowest homogeneous
component of \(x+y\) is \(y_k\), since all homogeneous components of \(x\) have
positive degree. Hence the degree \(k\) component of \(x+y-xy\) is
\(y_k\neq0\), contradicting \(x+y-xy=0\).

All possibilities lead to a contradiction, so \(J=0\). Therefore
\(L_K(E,\mathcal L,\mathcal B)\) is semiprimitive.
\end{proof}

%
%
%
%


\vskip 0.5cm

\begin{thebibliography}{99}

\bibitem{a:lpatfd}
G. Abrams,
Leavitt path algebras: the first decade,
\emph{Bull. Math. Sci.} \textbf{5} (2015), no.~1, 59--120.

\bibitem{ap:tlpaoag05}
G. Abrams and G. Aranda Pino,
The Leavitt path algebra of a graph,
\emph{J. Algebra} \textbf{293} (2005), no.~2, 319--334.

\bibitem{AAS}
G. Abrams, P. Ara and M. Siles Molina,
\emph{Leavitt Path Algebras},
Lecture Notes in Mathematics, vol.~2191, Springer, London, 2017.

\bibitem{AbHaz23}
G. Abrams and R. Hazrat,
Connections between abelian sandpile models and the \(K\)-theory of weighted
Leavitt path algebras,
\emph{Eur. J. Math.} \textbf{9} (2023), no.~2, Paper No.~21, 28 pp.

\bibitem{AbHaz25}
G. Abrams and R. Hazrat,
Monoids, dynamics and Leavitt path algebras,
\emph{Expo. Math.} \textbf{43} (2025), no.~5, Article No.~125684, 17 pp.

\bibitem{amp:nktfga}
P. Ara, M. A. Moreno and E. Pardo,
Nonstable \(K\)-theory for graph algebras,
\emph{Algebr. Represent. Theory} \textbf{10} (2007), 157--178.

\bibitem{BaCaPask}
T. Bates, T. M. Carlsen and D. Pask,
\(C^*\)-algebras of labelled graphs III---\(K\)-theory computations,
\emph{Ergodic Theory Dynam. Systems} \textbf{37} (2017), no.~2, 337--368.

\bibitem{BatesPask:caolg}
T. Bates and D. Pask,
\(C^*\)-algebras of labelled graphs,
\emph{J. Operator Theory} \textbf{57} (2007), no.~1, 207--226.

\bibitem{BaCaGoRo}
D. Bagio, G. Gil Canto, D. Gon\c{c}alves and D. Royer,
The reduction theorem for algebras of one-sided subshifts over arbitrary
alphabets,
\emph{Rev. R. Acad. Cienc. Exactas F\'is. Nat. Ser. A Mat. RACSAM}
\textbf{118} (2024), no.~2, Paper No.~72, 21 pp.

\bibitem{BCGW:lpaolg}
G. Boava, G. G. de Castro, D. Gon\c{c}alves and D. W. van Wyk,
Leavitt path algebras of labelled graphs,
\emph{J. Algebra} \textbf{629} (2023), 265--306.

\bibitem{BoavaCastroMortari:inverseSemigroups}
G. Boava, G. G. de Castro and F. de L. Mortari,
Inverse semigroups associated with labelled spaces and their tight spectra,
\emph{Semigroup Forum} \textbf{94} (2017), no.~3, 582--609.

\bibitem{BoavaCastroMortari:groupoidModels}
G. Boava, G. G. de Castro and F. de L. Mortari,
Groupoid models for the \(C^*\)-algebra of labelled spaces,
\emph{Bull. Braz. Math. Soc. (N.S.)} \textbf{51} (2020), 835--861.

\bibitem{CasWyk}
G. G. de Castro and D. W. van Wyk,
Labelled space \(C^*\)-algebras as partial crossed products and a simplicity
characterization,
\emph{J. Math. Anal. Appl.} \textbf{491} (2020), no.~1, Article No.~124290,
35 pp.

\bibitem{dgn:otioulpa}
T. T. H. Duyen, D. Gon\c{c}alves and T. G. Nam,
On the ideals of ultragraph Leavitt path algebras,
\emph{Algebr. Represent. Theory} \textbf{27} (2024), no.~1, 77--113.

\bibitem{CantoGon}
C. Gil Canto and D. Gon\c{c}alves,
Representations of relative Cohn path algebras,
\emph{J. Pure Appl. Algebra} \textbf{224} (2020), no.~7, Article No.~106310,
15 pp.

\bibitem{gon:ratrt19}
D. Gon\c{c}alves and D. Royer,
Representations and the reduction theorem for ultragraph Leavitt path algebras,
\emph{J. Algebraic Combin.} \textbf{53} (2021), no.~2, 505--526.

\bibitem{gr:saccfulpavpsgrt}
D. Gon\c{c}alves and D. Royer,
Simplicity and chain conditions for ultragraph Leavitt path algebras via
partial skew group ring theory,
\emph{J. Aust. Math. Soc.} \textbf{109} (2020), no.~3, 299--319.

\bibitem{GoncalvesRoyer:socleSubshift}
D. Gon\c{c}alves and D. Royer,
The socle of subshift algebras, with applications to subshift conjugacy,
\emph{Proc. Roy. Soc. Edinburgh Sect. A} \textbf{156} (2026), no.~3,
844--869.

\bibitem{HazNam1}
R. Hazrat and T. G. Nam,
Unital algebras being Morita equivalent to weighted Leavitt path algebras,
\emph{J. Algebraic Combin.} \textbf{62} (2025), no.~2, Paper No.~28, 21 pp.

\bibitem{HazNam2}
R. Hazrat and T. G. Nam,
On structural connections between sandpile monoids and weighted Leavitt path
algebras,
\emph{J. Algebra} \textbf{678} (2025), 543--569.

\bibitem{ima:tlpaou}
M. Imanfar, A. Pourabbas and H. Larki,
The Leavitt path algebras of ultragraphs,
\emph{Kyungpook Math. J.} \textbf{60} (2020), no.~1, 21--43.

\bibitem{Jacobson}
N. Jacobson,
\emph{Structure of Rings},
American Mathematical Society Colloquium Publications, vol.~37,
American Mathematical Society, Providence, RI, 1956.

\bibitem{NN:pisulpa}
T. G. Nam and N. D. Nam,
Purely infinite simple ultragraph Leavitt path algebras,
\emph{Mediterr. J. Math.} \textbf{19} (2022), no.~1, Paper No.~7, 20 pp.

\bibitem{tomf:auatelaacaastg03}
M. Tomforde,
A unified approach to Exel--Laca algebras and \(C^*\)-algebras associated to
graphs,
\emph{J. Operator Theory} \textbf{50} (2003), no.~2, 345--368.

\end{thebibliography}
\end{document}